\newenvironment{proof}{\noindent {\bf Proof.\/}}{$\qed$\vskip 0.1in}
\newcommand{\la}{\langle}
\newcommand{\ra}{\rangle}
\newcommand{\ol}{\overline}
\newcommand{\e}{\varepsilon}
\newcommand{\ba}{\begin{array}}
\newcommand{\ea}{\end{array}}
\newcommand{\be}{\begin{equation}}
\newcommand{\ee}{\end{equation}}
\newcommand{\bea}{\begin{eqnarray}}
\newcommand{\eea}{\end{eqnarray}}
\newcommand{\beaa}{\begin{eqnarray*}}
\newcommand{\eeaa}{\end{eqnarray*}}
\def\dbE{\mathbb{E}}
\def\dbF{\mathbb{F}}
\def\dbG{\mathbb{G}}
\def\dbL{\mathbb{L}}
\def\dbP{\mathbb{P}}
\def\dbR{\mathbb{R}}
\def\a{\alpha}
\def\b{\beta}
\def\d{\delta}
\def\e{\varepsilon}
\def\si{\sigma}
\def\f{\varphi}
\def\o{\omega}
\def\G{\Gamma}
\def\D{\Delta}
\def\Th{\Theta}
\def\Om{\Omega}
\def\cA{{\cal A}}
\def\cB{{\cal B}}
\def\cD{{\cal D}}
\def\cF{{\cal F}}
\def\cG{{\cal G}}
\def\cL{{\cal L}}
\def\cP{{\cal P}}
\def\cW{{\cal W}}
\def\ch{\textsc{h}}
\def\no{\noindent}
\def\ms{\medskip}
\def\q{\quad}
\def\qq{\qquad}
\def\pa{\partial}
\def\cd{\cdot}
\def\cds{\cdots}
\def\td{\nabla}
\def\tr{\hbox{\rm tr}}
\def\qed{ \hfill \vrule width.25cm height.25cm depth0cm\smallskip}
\newcommand{\basa}{\begin{assumption}}
\newcommand{\easa}{\end{assumption}}
\newcommand{\bas}{\begin{assum}}
\newcommand{\eas}{\end{assum}}
\def\pa{\partial}
 \def\cd{\cdot}
\def\cds{\cdots}
\def\tr{\hbox{\rm tr$\,$}}
\def\dis{\displaystyle}
\def\cad{c\`{a}dl\`{a}g}
\def\1{{\bf 1}}
\def\:{\!:\!}
\def\reff#1{{\rm(\ref{#1})}}
\newtheorem{theorem}{Theorem}[section]
\newtheorem{lemma}[theorem]{Lemma}
\newtheorem{remark}[theorem]{Remark}
\newtheorem{example}[theorem]{Example}
\newtheorem{definition}[theorem]{Definition}
\newtheorem{assumption}[theorem]{Assumption}
\numberwithin{equation}{section}
\numberwithin{theorem}{section}
\newcommand{\TheTitle}{Stochastic Control with Delayed Information and Related Nonlinear Master Equation}
\title{{\TheTitle}}
\author{
  Yuri F. Saporito\thanks{Funda\c{c}\~ao Getulio Vargas, Escola de Matem\'atica Aplicada, yuri.saporito@fgv.br} 
  \and
  Jianfeng Zhang\thanks{University of Southern California, Department of Mathematics, jianfenz@usc.edu. Research supported in part by NSF grant DMS 1413717.}
}
\begin{document}

\maketitle

\begin{abstract}
In this paper we study stochastic control problems with delayed information, that is, the control at time $t$ can depend only on the information observed before time $t-\ch$ for some delay parameter $\ch$. Such delay occurs frequently in practice and  can be viewed as a special case of partial observation. When the time duration $T$ is smaller than $\ch$, the problem becomes a deterministic control problem in stochastic setting. While seemingly simple, the problem involves certain time inconsistency issue, and the value function naturally relies on the distribution of  the state process and thus is a solution to a nonlinear master equation. Consequently, the optimal state process solves a McKean-Vlasov SDE. In the general case that $T$ is larger than $\ch$, the master equation becomes path-dependent and the corresponding  McKean-Vlasov SDE involves the conditional distribution of the state process. We shall build these connections rigorously, and obtain the existence of classical solution of these nonlinear (path-dependent) master equations in some special cases.
\end{abstract}


\textbf{Keywords:} Information delay, partial observation, master equation, McKean Vlasov SDE,  functional It\^{o} formula.\\

\textbf{AMS:} 60H30, 93E20.

\section{Introduction}
Consider a stochastic control problem:
\bea
\label{control}
\left.\ba{c}
\dis V_0 = \sup_{\a\in\cA} \dbE\Big[g(X^\a_T) + \int_0^T f(t, X^\a_t, \a_t) dt\Big],\\
\dis \mbox{where}\q  X^\a_t = x + \int_0^t b(s, X^\a_s, \a_s) ds + \int_0^t \si(s, X^\a_s, \a_s) dW_s,
\ea\right.
\eea
and $\cA$ is an appropriate set of $A$-valued admissible controls. It is well known that, under mild conditions, $V_0 = u(0, x)$, where $u$ is the solution of an HJB equation. One standard but crucial condition in the literature is that the admissible control is $\dbF$-progressively measurable, where $\dbF = \{\cF_t\}_{0\le t\le T}$ is a filtration under which $W$ is a Brownian motion. 

Our paper is mainly motivated by the following practical consideration. Note that $\cF_t$ stands for the information the player observes over time period $[0, t]$. In many practical situations, the player needs some time to collect and/or to analyze the information, including numerical computations. Thus, the control $\a_t$ the player needs to act at time $t$ may not be able to utilize the most recent information, or say, there is some information delay. To be precise, let $\ch>0$ be a fixed constant standing for the delay parameter. In this paper we shall study the control problem \reff{control} by restricting the admissible control $\a$ in $\cA^\ch_0$, i.e. such that  $\a_t\in \cF_{(t-\ch)^+}$, for all $t\in [0, T]$. This can be viewed as a special case of stochastic controls with \textit{partial observation}. For a literature review, see Section \ref{sec:review}.

We first consider the simple case that $T \le \ch$. Then $\a_t\in \dbL^0(\cF_0)$ for all $t\in [0, T]$, and thus this is a deterministic control problem (assuming $\cF_0$ is degenerate), but in a stochastic framework. While seemingly simpler, the constraint that the control is deterministic actually makes the problem more involving. The main reason is that such a problem is time inconsistent if one follows the standard approach. Intuitively, for problem \reff{control} the optimal control $\a^*_t = \a^*(t, X^*_t)$ may typically depend on the corresponding state process $X^*_t$ and thus is random. If the control is deterministic, the optimal control (assuming its existence) $\a^*_t = \a^*(t, x)$ should typically depend only on the initial value $x$ for all $t\in [0, T]$. When one considers a dynamic problem over time period $[t_0, T]$, the new ``deterministic" optimal control will become $\tilde \a^*_t =  \tilde \a^*(t, X^*_{t_0})$ for all $t\in [t_0, T]$, which will be $\cF_{t_0}$-measurable rather than $\cF_0$-measurable, and thus typically $\tilde \a^*_t \neq \a^*_t$ for $t\ge t_0$. That is, the problem is time inconsistent.  

We aim to solve the problem in a time consistent way. Note again that, in standard control problem, the optimal control reacts to the state process $X^*_t$. If the control is deterministic, and we still want the optimal one to react to the state process in some way, the most natural choice would be that $\a^*_t$ reacts to the law of $X^*_t$. This is indeed true. At time $t_0$, instead of specifying a value of $X_{t_0}$, we shall specify the distribution $\mu$ of $X_{t_0}$ and define the value $V(t_0, \mu)$ for optimization over $[t_0, T]$ with deterministic control. It turns out that this dynamic problem is time consistent. The function $V$ satisfies an appropriate dynamic programming principle and is the solution of a so-called master equation. Moreover, $V_0 = V(0, \d_x)$, where $x$ is the initial value $X_0$ and $\d_x$ is the Dirac-measure of $x$.  

To understand the master equation, we remark that $V: [0, T] \times \cP_2(\dbR^d)\to \dbR$ is a deterministic function, where $\cP_2(\dbR^d)$ is the set of square-integrable probability measures on $\dbR^d$. It is known that the derivative of $V$ in terms of $\mu\in \cP_2(\dbR^d)$ takes the form $\pa_\mu V: (t, \mu, x)\in [0, T]\times \cP_2(\dbR^d) \times \dbR^d \to \dbR^d$.  Denote by $\pa_x \pa_\mu V$ the standard derivative of $\pa_\mu V$ with respect to $x$. Then the optimization problem \reff{control} with deterministic control is associated with the following HJB type of master equation:
\bea
\label{master-intro}
 \pa_t V(t, \mu) +  H(t, \mu, \pa_\mu V, \pa_x \pa_\mu V) =0,\q V(T, \mu) = \dbE[g(\xi)],
\eea
where, for $p: [0, T]\times \cP_2(\dbR^d)\times \dbR^d \to \dbR^d$ and $q: [0, T]\times \cP_2(\dbR^d)\times \dbR^d \to \dbR^{d\times d}$,
\bea
\label{Hamiltonian-intro}
\left.\ba{c}
\dis H(t, \mu, p, q):= \sup_{a\in A} h(t, \mu, p, q, a),\\  
 h(t, \mu, p, q, a) := \dbE\Big[ b(t, \xi, a) \cd p(t, \mu, \xi) + \dis {1\over 2} \si\si^\top(t, \xi, a) :  q(t,\mu, \xi) + f(t, \xi, a)\Big].
 \ea\right.
\eea
Here $\xi$ is a random variable with law $\mu$. We shall prove the existence of classical solutions for a special case of \reff{master-intro}, which to our best knowledge is new in the literature.  

Assume further that the Hamiltonian $H$ has optimal argument $a^* = I(t, \mu)$ for some function $I: [0, T] \times \cP_2(\dbR^d) \to A$, then the optimal control is $\a^*_t = I(t, \cL_{X^*_t})$, where $\cL_\xi$ is the law of the random variable $\xi$ and $X^*$ solves  the following McKean-Vlasov SDE (assuming its wellposedness):   
\bea
\label{MV-intro}
X^*_t = x + \int_0^t b\big(s, X^*_s, I(s, \cL_{X^*_s})\big) ds +  \int_0^t \si\big(s, X^*_s, I(s, \cL_{X^*_s})\big) dW_s.
\eea
We shall carry out the verification theorem rigorously when $I$ is continuous.

We finally consider the general case $T > \ch$. In this case, for $t > \ch$, the control $\a_t$ is required to be $\cF_{t-\ch}$-measurable. Motivated by both theoretical and practical considerations, we shall use closed-loop controls, namely $\a_t = \a_t(X_{[0, t-\ch]})$ is $\cF_{t-\ch}$-measurable. Then the value function, $V(t,  \mu_{[0,t]} )$, will be path-dependent in the sense that $\mu_{[0,t]} $ denotes the law of the stopped process $X_{[0,t]}$, and the master equation \reff{master-intro} becomes a path-dependent equation. Consequently, the McKean-Vlasov SDE \reff{MV-intro} will involve the conditional law of $X^*_t$.

We finish this section with a thoroughly comparison of different problems and methods that relates to the ones proposed here, which is further developed in Appendix \ref{sect-appendix_comparison}. The rest of the paper will be organized as follows. We discuss the deterministic case in Section \ref{sect-deterministic}. Moreover, a special case is fully developed in Section \ref{sect-example} and the general theory is presented in Section \ref{sect-general}.

\subsection{Comparison to Similar Control Problems and Methods}\label{sec:review}

As mentioned above, problem (\ref{control}) with $\a \in \cA^\ch_0$ might be seen as a special case of stochastic controls with partial observation. Generally, these stochastic control problems assume that the  admissible controls are adapted to a smaller filtration $\dbG$, i.e. $\cG_t \subset \cF_t$, for all $t \in [0,T]$. Few papers have tackled this problem under this generality, see, for instance, \cite{partial_obs_cristopeit1980}, where the existence of the optimal control is studied, and \cite{partial_obs_oksendal2007}, where a maximum principle was derived under the more general L\'evy processes.

Additionally to the delayed case studied in our paper, a very important example of the partial observation problem is the case of noisy observation. This situation has drawn significantly more attention than the other types of partially observed systems. For references, see \cite{partial_obs_bensoussan_book, partial_obs_fleming_pardoux, partial_obs_fleming, partial_obs_fleming2, partial_obs_bismut, partial_obs_tang}, and the more recent \cite{partial_obs, partial_obs_2}. On these aforesaid references, a \textit{separated} control problem is proposed and studied. The optimal control problem of the partially observed system is connected to this separated problem, which is completely observed, using stochastic nonlinear filtering. It is worth noticing that, similarly to what we have found in the control with delayed information, the state variable of the separated control problem is an unnormalized conditional distribution measure and the class of admissible controls is a set of probability measures. Moreover, the dynamics of the aforementioned unnormalized conditional distribution measure is given by the so-called Zakai's equation. 

Moreover, in \cite{partial_obs_2}, the authors have derived, in this context of noisy observation, the dynamic programming principle with flow of probability measures as state variable and the verification theorem of their master equation. Since the deterministic control problem studied in Section \ref{sect-deterministic} is a particular case of the noisy observation problem, our master equation and the dynamic programming principle  in this section could be seen as a special case of theirs. However, our arguments here are much simpler, due to our special setting, and will be important for the general case in Section \ref{sect-general}, so we  decide to report our proofs in details so that the readers can easily grasp the main ideas. 

In a different direction, although analyzing the same control problem as in the references in the paragraph above, \cite{partial_obs_mortensen} and \cite{partial_obs_benes_karatzas} have studied the value of the control problem as a function of the initial conditional probability density and an HJB equation analogous to our master equation (\ref{master-intro}) was derived. Moreover, an It\^o formula for functions of density-valued processes was proved, c.f. Lemma \ref{lem-Ito}. Under the assumption that the agent observes pure independent noise, it turns out that their control problem is equivalent to our deterministic control problem in Section \ref{sect-deterministic}. Moreover, when restricting to only those measures with density,  our master equation \reff{master-intro} is equivalent to Mortensen's HJB equation. In order to verify this, one needs to understand the relation between G\^ateaux derivatives with respect to the probability densities and $\pa_\mu V$, see \cite{master_eq_bensoussan}. For more details, see Appendix \ref{sect-appendix_comparison}.

Furthermore, in the direction of applications of the delayed information setting to Mathematical Finance, \cite{mostafa_delay} have proposed a discrete-time binomial model with delayed information for the price of asset. They studied the super-replication of derivatives with convex payoffs and also the convergence of their model to a continuous-time one (without delay).

A different aspect of delay in control problems is when the control chosen in a previous time, for instance at $t - \ch$, influences the dynamics and/or the cost function at time $t$. In the literature, this is usually called stochastic controls problems with \textit{delay in the control}, see for example, \cite{gozzi_delay_1, gozzi_delay_2, Alekal1971, cheng_delay}. More generally, path dependence in the control was studied in \cite{fito_saporito_control} in the framework of functional It\^o calculus. This type of delay in the control is fundamentally different than the one we study here. Notice that, although the control acts with delay, the agent has full information at time $t$ to choose $\a_t$. This departs completely from the setting we are proposing in this paper. Moreover, as one could easily notice from the aforesaid references, the value function $V$ are not seen as function of probability measures, but a function of the history of the state process. This type of delay in the control was recently applied to the study of systemic risk of a system of banks in \cite{fouque_systemic_delay}.

We remark that the McKean-Vlasov SDE (for forward state process) and the master equation (for backward value function) have received very strong attention in recent years, mainly due to its application in mean field games and systemic risk, see \cite{CHM}, \cite{LL}, as well as \cite{Cardaliaguet},   \cite{BFY},  \cite{CD1, CD2}, and the references therein. In those applications, a large number of players are involved and the measure $\mu$ is introduced to characterize the aggregate behavior of the players. Our motivation here is quite different. We also remark that our paper deals with control problems and the master equation is nonlinear in $\pa_\mu V$ (and/or $\pa_x\pa_\mu V$). For mean field game problems, the master equation involves $V(t,x,\mu)$ and  has quite different nature. On one hand those master equations are nonlocal,  and on the other hand they are typically nonlinear in $\pa_x V$ but linear in $\pa_\mu V$. In fact, in some literature master equations refer to only those for mean field games while the equations for control problems are called HJB equations in Wasserstein space. We nevertheless call both master equations since they share many features.
In a special case, we will prove the existence of classical solutions for the nonlinear master equation \reff{master-intro}.  In general it is difficult to obtain classical solutions for master equations,  some positive results include  \cite{mean_field_associated_pde}, \cite{CDLL},  and \cite{CCD} where the equations are linear in $\pa_\mu V$ and $\pa_x\pa_\mu V$, and \cite{GS} and \cite{BY} where the equations are of first order (without involving $\pa_x \pa_\mu V$). We also refer to \cite{PW} and \cite{WZ} for viscosity solutions of master equations.

Furthermore, although we are considering \textit{control} problems, the delayed observation aspect of our setting is present in  \cite{stackelberg_bensoussan} and \cite{stackelberg_bensoussan_linear_quad} which study Stackelberg stochastic \textit{games} with delayed information.  A simple version of these games can described by two players: a leader and a follower. The leader has full information of both players and the follower has delayed information of the leader state variable (and full information of him/herself). In aforesaid references, the authors study the convergence of the system of $N$-players to its mean field counterpart. Moreover, in the linear-quadratic case, they were able to analyze and derive exact formulas for the mean field game.

\section{The Deterministic Control Problem}
\label{sect-deterministic}
\setcounter{equation}{0}

We remark that this case is the intersection of several related works. For example,  \cite{HT} studied the linear quadratic case by using the stochastic maximum principle, see Appendix  A;  \cite{partial_obs_benes_karatzas} derived a similar equation when the measures have a density, see Appendix B; in particular, our master equation \reff{master}-\reff{Hamiltonian} and the DPP Theorem \ref{thm-DPP1} below are already covered by \cite{partial_obs_2} as a special case. However, since the arguments here are much simpler due to the special structure, which could be helpful for readers to grasp the main ideas, and more importantly since these arguments will be important for the general case in Section \ref{sect-general}, we still provide the details.

Let $T>0$ be a fixed time horizon, $(\Om, \dbF, \dbP)$ a filtered probability space on $[0, T]$, and $W$ an $\dbF$-Brownian motion under $\dbP$. In this section we assume $T \le \ch$ and thus the controls are deterministic. Denote by $\cP_2(\dbR^d)$ the set of square-integrable measures on $\dbR^d$, and for each $\mu\in \cP_2(\dbR^d)$, denote $\dbL^2_\mu(\cF_t) := \{\xi \in \dbL^2(\cF_t): \cL_\xi = \mu\}$, where $\cL_\xi$ denotes the law of $\xi$ and $\dbL^2(\cF_t)$ is the space of  $\cF_t$-measurable square-integrable random variables. 
For technical convenience, we shall assume $\cF_0$ is rich enough such that $\dbL^2_\mu(\cF_0) \neq \emptyset$ for all $\mu \in \cP_2(\dbR^d)$. However, in this and the next section we nevertheless assume the controls are deterministic, rather than $\cF_0$-measurable. Finally, denote $\Th := [0, T] \times \cP_2(\dbR^d)$ and $\ol \Th := \{(t,\xi): t\in [0, T], \xi \in \dbL^2(\cF_t)\}$.

\subsection{The control problem}
 Let $A$ be an (arbitrary) measurable set  in certain Euclidian space, and $\cA_t$ the set of all Borel measurable functions $\a: [t, T] \to A$. For any $(t,\xi) \in \ol\Th$ and $\a\in \cA_t$, define
 \bea
 \label{Ja}
 \left.\ba{c}
\dis X_s^{t, \xi, \a} = \xi + \int_t^s b(r, X_r^{t, \xi, \a}, \a_r)dr + \int_t^s \sigma(r, X_r^{t, \xi, \a}, \a_r) dW_r, ~ s\in [t, T],\\
\dis J(t,\xi, \a) := \dbE\left[g(X_T^{t, \xi, \a}) + \int_t^T f(s, X_s^{t, \xi, \a}, \a_s)ds \right],
\ea\right.
\eea
where $b, \si, f, g$ are deterministic functions with appropriate dimensions. 

\begin{assumption}
\label{assum-deterministic1} (i) $b, \si, f, g$ are measurable in all their variables, and $b(t, 0, a)$, $\si(t,0, a)$, $f(t,0,a)$ are bounded;

(ii) $b$, $\si$ are uniformly Lipschitz continuous in $x$, and uniformly continuous in $t$;

(iii)  $f$ is uniformly continuous in $(t,x)$, and $g$ is uniformly continuous in $x$. 
\end{assumption}

Under Assumption \ref{assum-deterministic1}, clearly the SDE in \reff{Ja} is wellposed and 
\bea
\label{Jbound}
|J(t,\xi, \a)| \le C[1+\|\xi\|_{\dbL^2}].
\eea
Moreover, the following result is obvious:

\begin{lemma}
\label{lem-invariant}
Under Assumption \ref{assum-deterministic1},  the mapping $\xi \mapsto J(t,\xi,\a)$ is law invariant.  That is, if $\cL_\xi = \cL_{\xi'}$, then $J(t, \xi, \a) = J(t, \xi', \a)$.
\end{lemma}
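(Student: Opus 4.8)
The plan is to exploit the fact that the control $\a$ is a \emph{deterministic} function of time, so that the controlled state $X^{t,\xi,\a}$ depends on the randomness only through the pair consisting of the initial datum $\xi$ and the Brownian increments after time $t$; the latter are independent of $\cF_t$ and carry a law that does not depend on $\xi$. Law invariance then follows from a change-of-variables (pushforward) argument.

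In more detail, the first step would be to record a strong-solution representation for the SDE in \reff{Ja}. Under Assumption \ref{assum-deterministic1}, $b$ and $\si$ are uniformly Lipschitz in $x$ with at most linear growth (since $b(\cdot,0,\cdot)$ and $\si(\cdot,0,\cdot)$ are bounded), and $\a$ is a fixed Borel function. By standard SDE theory (pathwise uniqueness plus strong existence, i.e.\ the Yamada--Watanabe argument), there is a Borel-measurable solution functional
\[
\Phi^\a : \dbR^d \times \cC([t,T];\dbR^d) \longrightarrow \cC([t,T];\dbR^d)
\]
not depending on the initial random variable, such that $X^{t,\xi,\a}_\cdot = \Phi^\a\big(\xi, (W_\cdot - W_t)\big)$, $\dbP$-a.s., for every $\xi \in \dbL^2(\cF_t)$. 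Equivalently, one can bypass an abstract functional and argue directly on the Picard iterates $X^{(0)}\equiv \xi$, $X^{(n+1)}_s = \xi + \int_t^s b(r,X^{(n)}_r,\a_r)\,dr + \int_t^s \si(r,X^{(n)}_r,\a_r)\,dW_r$, each of which is manifestly a measurable functional of $(\xi, (W_\cdot - W_t))$, and which converge in $\dbL^2$ to $X^{t,\xi,\a}$.

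The second step uses that $W$ is an $\dbF$-Brownian motion: the increment process $W^t := (W_r - W_t)_{t\le r\le T}$ is independent of $\cF_t$, and its law on $\cC([t,T];\dbR^d)$ is Wiener measure, independent of the initial condition. Hence, for any $\xi \in \dbL^2(\cF_t)$, the joint law of $(\xi, W^t)$ on $\dbR^d \times \cC([t,T];\dbR^d)$ is the product $\cL_\xi \otimes \cL_{W^t}$, which depends on $\xi$ only through $\cL_\xi$. Therefore, if $\cL_\xi = \cL_{\xi'}$ with $\xi, \xi' \in \dbL^2(\cF_t)$, the pairs $(\xi, W^t)$ and $(\xi', W^t)$ have the same law. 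Pushing these equal laws forward through the measurable map $\Phi^\a$ then shows that $X^{t,\xi,\a}$ and $X^{t,\xi',\a}$ have the same law as $\cC([t,T];\dbR^d)$-valued random elements; and since Assumption \ref{assum-deterministic1} makes $g$ and $f(s,\cdot,\a_s)$ continuous (so that $\omega \mapsto g(\omega_T) + \int_t^T f(s,\omega_s,\a_s)\,ds$ is a measurable functional on path space, integrable by \reff{Jbound}), the expectation $J(t,\xi,\a)$ is determined by the law of $X^{t,\xi,\a}$ alone, giving $J(t,\xi,\a) = J(t,\xi',\a)$.

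The only non-routine point, and thus the main obstacle, is justifying the universal measurable solution functional $\Phi^\a$ in the first step, i.e.\ that pathwise uniqueness and strong existence upgrade to a single measurable map valid for all initial data simultaneously. Once this representation (or, alternatively, the explicit Picard-iterate description together with $\dbL^2$-convergence) is secured, the remaining steps are routine pushforward and integration arguments, which is presumably why the authors regard the statement as obvious.
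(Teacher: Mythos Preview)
Your proposal is correct and supplies exactly the details the paper omits: the authors do not give a proof at all, writing only that ``the following result is obvious'' before stating the lemma. Your argument---via the strong-solution functional $\Phi^\a$ (or the Picard iterates), the independence of $(W_r-W_t)_{r\ge t}$ from $\cF_t$, and a pushforward---is the standard way to unpack this obviousness, and the one subtlety you flag (the existence of a universal measurable solution map) is indeed the only point requiring care. An equivalent but slightly lighter route is to invoke weak uniqueness directly: since the coefficients $(r,x)\mapsto b(r,x,\a_r),\ \si(r,x,\a_r)$ are deterministic and Lipschitz in $x$, pathwise uniqueness holds, hence uniqueness in law, so $\cL_{X^{t,\xi,\a}}$ is determined by $\cL_\xi$ alone without needing to exhibit $\Phi^\a$ explicitly.
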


We are now ready to introduce the optimization problem:
\bea
\label{Vt}
V(t, \mu) := \sup_{\a\in \cA_t} J(t,\xi, \a),\q (t,\mu) \in \Th~\mbox{and}~ \xi \in \dbL^2_\mu(\cF_t).
\eea
By \reff{Jbound}, $V(t,\mu)$ is finite. We emphasize that $V$ does not depend on the choice of $\xi$, thanks to Lemma \ref{lem-invariant}. 
Throughout this section, when there is no confusion, for a given $(t,\mu) \in \Th$ we shall always use $\xi$ to denote some random variable in $\dbL^2_\mu(\cF_t)$, and the claimed results will not depend on the choice of $\xi$.

We next establish the dynamic programming principle for $V$. 




\begin{theorem}
\label{thm-DPP1}
Let Assumption \ref{assum-deterministic1} hold. Then, for any $(t_1, \mu)\in \Th$, $t_2\in (t_1, T]$, 
\bea
\label{DPP1}
V(t_1, \mu) = \sup_{\a \in \cA_{t_1}} \Big[ V\big(t_2, \cL_{X_{t_2}^{t_1, \xi, \a}}\big) + \int_{t_1}^{t_2} \dbE[f(s, X_s^{t_1, \xi, \a}, \a_s)] ds \Big].
\eea
\end{theorem}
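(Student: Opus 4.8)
The plan is to prove the two inequalities in \reff{DPP1} separately, relying only on the flow property of the controlled SDE together with the law invariance of Lemma \ref{lem-invariant}. The essential simplification compared with the classical stochastic dynamic programming principle is that the admissible controls here are \emph{deterministic} functions of time: the concatenation of a control on $[t_1,t_2]$ with one on $[t_2,T]$ is again admissible with no measurability obstruction, so no measurable selection is needed. First I would record the flow property. For $\a\in\cA_{t_1}$, set $\a^1:=\a|_{[t_1,t_2]}$ and $\a^2:=\a|_{[t_2,T]}\in\cA_{t_2}$; strong uniqueness for the SDE in \reff{Ja} gives $X^{t_1,\xi,\a}_{t_2}=X^{t_1,\xi,\a^1}_{t_2}$ and $X^{t_1,\xi,\a}_s=X^{t_2,\,X^{t_1,\xi,\a}_{t_2},\,\a^2}_s$ for $s\in[t_2,T]$. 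Standard estimates yield $X^{t_1,\xi,\a}_{t_2}\in\dbL^2(\cF_{t_2})$, so $\big(t_2,\cL_{X^{t_1,\xi,\a}_{t_2}}\big)\in\Th$ and every term in \reff{DPP1} is well defined.

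Next I would split the cost at $t_2$. Using the additivity of the running integral and the flow property, for every $\a\in\cA_{t_1}$,
\[
J(t_1,\xi,\a)=\dbE\Big[\int_{t_1}^{t_2} f(s,X^{t_1,\xi,\a}_s,\a_s)\,ds\Big]+J\big(t_2,X^{t_1,\xi,\a}_{t_2},\a^2\big).
\]
For the inequality ``$\le$'', since $\a^2\in\cA_{t_2}$ and $X^{t_1,\xi,\a}_{t_2}\in\dbL^2_{\cL_{X^{t_1,\xi,\a}_{t_2}}}(\cF_{t_2})$, the definition of $V$ together with Lemma \ref{lem-invariant} gives $J\big(t_2,X^{t_1,\xi,\a}_{t_2},\a^2\big)\le V\big(t_2,\cL_{X^{t_1,\xi,\a}_{t_2}}\big)$. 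Substituting this into the decomposition and taking the supremum over $\a\in\cA_{t_1}$ yields the ``$\le$'' half of \reff{DPP1}.

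For the reverse inequality ``$\ge$'', fix any $\a^1\in\cA_{t_1}$ and any $\b\in\cA_{t_2}$, and let $\a:=\a^1\oplus\b$ denote the concatenation equal to $\a^1$ on $[t_1,t_2)$ and $\b$ on $[t_2,T]$, which is Borel measurable and hence lies in $\cA_{t_1}$. Because $X^{t_1,\xi,\a}_{t_2}=X^{t_1,\xi,\a^1}_{t_2}$, the decomposition above gives
\[
V(t_1,\mu)\ \ge\ J(t_1,\xi,\a)=\dbE\Big[\int_{t_1}^{t_2} f(s,X^{t_1,\xi,\a^1}_s,\a^1_s)\,ds\Big]+J\big(t_2,X^{t_1,\xi,\a^1}_{t_2},\b\big).
\]
Since the left-hand side and the running-cost term do not depend on $\b$, I can take the supremum over $\b\in\cA_{t_2}$ on the right, which by the definition of $V$ and Lemma \ref{lem-invariant} turns the last term into $V\big(t_2,\cL_{X^{t_1,\xi,\a^1}_{t_2}}\big)$ exactly (no $\e$-optimization is needed). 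Taking the supremum over $\a^1\in\cA_{t_1}$ then produces the ``$\ge$'' half, completing the proof.

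I do not expect a genuine obstacle here: the usual difficulty in the ``$\ge$'' direction — pasting together $\e$-optimal continuation controls selected measurably as a function of the intermediate state $X_{t_2}$ — is absent precisely because the near-optimal control on $[t_2,T]$ is a deterministic function, so the supremum over $\b$ can be taken directly. The only points requiring care are the verification that the concatenated control is Borel measurable (immediate) and the repeated, careful use of Lemma \ref{lem-invariant} to pass between the random variable $X^{t_1,\xi,\a}_{t_2}$ and its law $\cL_{X^{t_1,\xi,\a}_{t_2}}$ whenever the definition of $V(t_2,\cdot)$ is invoked.
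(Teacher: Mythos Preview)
Your proposal is correct and follows essentially the same approach as the paper: both directions rely on the flow property of the SDE and the trivial concatenation of deterministic controls, with Lemma \ref{lem-invariant} used to pass between $X^{t_1,\xi,\a}_{t_2}$ and its law. Your ``$\ge$'' argument is in fact marginally cleaner than the paper's, which introduces an $\e/2$-optimal $\a^\e$ and then an $\e/2$-optimal continuation $\widetilde\a^\e$ before concatenating; as you observe, taking the supremum over $\b$ first yields $V(t_2,\cdot)$ directly by definition, so the $\e$-step is dispensable here.
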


\begin{proof}
For notational simplicity, we assume $t_1 = 0$ and $t_2 = t$, then \reff{DPP1} becomes:
\bea
\label{DPP1-1}
V(0, \mu) =  \widetilde V(0,\mu) := \sup_{\a \in \cA_0} \Big[ V\big(t, \cL_{X_{t}^{0, \xi, \a}}\big) + \int_0^{t} \dbE[f(s, X_s^{0, \xi, \a}, \a_s)] ds \Big].
\eea
On one hand, for any $\a\in \cA_0$, by the flow property for the SDE we have
\beaa
X_s^{0, \xi, \a} = X_s^{t, X_{t}^{0, \xi, \a}, \a'},  \q s\in [t, T],
\eeaa
where $\a' := \a\big|_{[t, T]} \in \cA_{t}$. Then,
\bea
\label{JDPP}
J(0, \xi, \a) &=& \dbE\Big[g(X_T^{t, X_{t}^{0, \xi, \a}, \a'}) + \int_{t}^T f(s, X_s^{t, X_{t}^{0, \xi, \a}, \a'},  \a'_s) ds + \int_0^{t} f(s, X_s^{0, \xi, \a},  \a_s) ds\Big]\nonumber\\
&=& J(t,  X_{t}^{0, \xi, \a}, \a') +  \int_0^{t} \dbE[f(s, X_s^{0, \xi, \a}, \a_s)] ds \\
&\le&  V\big(t,  \cL_{X_{t}^{0, \xi, \a}}\big) +  \int_0^{t} \dbE[f(s, X_s^{0, \xi, \a}, \a_s)] ds \le \widetilde V(0,\mu).\nonumber
\eea
By the arbitrariness of $\a$, we obtain $V(0,\mu) \le \widetilde V(0,\mu)$.

On the other hand, for any $\e>0$, by the definition of $\widetilde V(0,\mu)$, there exists $\a^\e \in \cA_0$ such that 
\beaa
V\big(t,  \cL_{X_{t}^{0, \xi, \a^\e}}\big)+  \int_0^{t} \dbE[f(s, X_s^{0, \xi, \a^\e}, \a^\e_s)] ds \ge \widetilde V(0,\mu) - \frac{\e}{2}.
\eeaa
Moreover, by the definition of $V\big(t,  \cL_{X_{t}^{0, \xi, \a^\e}}\big)$ there exists $\widetilde \a^\e\in \cA_{t}$ such that
\beaa
J(t,  X_{t}^{0, \xi, \a^\e}, \widetilde \a^\e)  \ge V\big(t,  \cL_{X_{t}^{0, \xi, \a^\e}}\big) - \frac{\e}{2}.
\eeaa
Note that $\hat \a^\e :=  \a\1_{[0, t)} + \a' \1_{[t, T]}  \in \cA_0$. Then, by the middle line of \reff{JDPP},
\beaa
V(0,\mu) &\ge& J(0, \xi, \hat \a^\e)  = J(t,  X_{t}^{0, \xi, \a^\e}, \widetilde \a^\e) +  \int_0^{t} \dbE[f(s, X_s^{0, \xi, \a^\e}, \a^\e_s)] ds\\
&\ge& V\big(t,  \cL_{X_{t}^{0, \xi, \a^\e}}\big) +  \int_0^{t} \dbE[f(s, X_s^{0, \xi, \a^\e}, \a^\e_s)] ds - \frac{\e}{2} \ge  \widetilde V(0,\mu) - \e.
\eeaa
Because $\e>0$ is arbitrary, we obtain $V(0,\mu) \ge \widetilde V(0,\mu)$.
\end{proof}

\begin{remark}
\label{rem-DPP}
Since we are in the simple setting of deterministic control, no regularity or even measurability of $V$ in terms of $(t, \mu)$ is needed in the above result.
\end{remark}

\subsection{The master equation}
In this subsection we derive the master equation associated with the value function $V$.  For this purpose, we first introduce the $2$-Wasserstein distance on $ \cP_2(\dbR^d)$:  for $\mu, \mu'\in \cP_2(\dbR^d)$, 
\bea
\label{W2}
\cW_2(\mu,\mu') := \inf\big\{ \|\xi - \xi'\|_{\dbL^2}:  \xi\in \dbL^2_\mu(\cF_T),  \xi' \in \dbL^2_\mu(\cF_T)\big\}.
\eea 
Let $V: \Th\to \dbR$. The time derivative of $V$ is defined in the standard way:
\bea
\label{patV}
\pa_t V(t,\mu) := \lim_{\d \downarrow 0} {V(t+\d, \mu) - V(t,\mu) \over \d},
\eea
provided the limit exists. Notice that the above is actually the right time derivative. The derivative in terms of $\mu$ is much more involved. We first lift the function $V$:
\bea
\label{UV}
U(t, \xi) := V(t, \cL_\xi),\q  \xi\in \dbL^2(\cF_t).
\eea
 Assume $U$ is continuously  Fr\'{e}chet differentiable in $\xi$, then the Fr\'{e}chet derivative $D U(t,\xi)$ can be identified as an element in $\dbL^2(\cF_t)$.  By  \cite{Cardaliaguet} (based on Lions' lecture), there exists a deterministic function $\pa_\mu V:\Th \times \dbR^d\to \dbR^d$ such that $D U(t, \xi) =  \pa_\mu V(t, \mu, \xi)$. See also \cite{WZ1} for an elementary proof.  This function $\pa_\mu V$ is our spatial derivative, which is called $L$-derivative or  Wasserstein gradient. In particular, the $L$-derivative is also a Gat\^eux derivative:
\bea
\label{pamuV}
\dbE\Big[\pa_\mu V(t, \mu, \xi) \cd \xi'\Big] = \lim_{\e\to 0} {V(t, \cL_{\xi + \e \xi'}) - V(t, \mu)\over \e},
\eea
for all $\xi\in \dbL^2_\mu(\cF_t)$ and $\xi'\in \dbL^2(\cF_t)$.

\begin{remark}
\label{rem-unique}
We shall remark that $\pa_\mu V(t,\mu, \cd): \dbR^d\to \dbR$ is unique only in the support of $\mu$.  Assume $\pa_\mu V$ exists and can be extended to $\dbR^d$ continuously, then we may define $\pa_x \pa_\mu V$ as the standard derivative of $\pa_\mu V$ in terms of the third variable. Obviously, $\pa_x\pa_\mu V(t,\mu, \cd)$ is also well defined only in the support of $\mu$. In this paper we shall always understand $\pa_\mu V$ in this way. In particular, we emphasize that the possible non-uniqueness of $\pa_\mu V(t,\mu, \cd)$ outside of the support of $\mu$ does not affect the It\^o formula \reff{Ito} below, which is what we will actually need in the paper. 
\end{remark}

\begin{definition}
\label{defn-C12}
(i) Let $C^1_{Lip,b}(\cP_2(\dbR^d))$ denote the space of functions $f : \cP_2(\dbR^d)$ $\to \dbR$ such that $\pa_\mu f$ exists everywhere and $\pa_\mu f:\cP_2(\dbR^d) \times \dbR^d \to \dbR^d$ is bounded and Lipschitz continuous.
%
%
%

(ii) Let $C^2_{Lip,b}(\cP_2(\dbR^d))$ denote the subset of $C^1_{Lip,b}(\cP_2(\dbR^d))$ such that 

$\bullet$  For each $x\in \dbR$, all components of  $\pa_\mu f(\cdot, x)$ belongs to $C^1_{Lip,b}(\cP_2(\dbR^d))$; 

$\bullet$ $\pa_\mu^2 f:\cP_2(\dbR^d) \times \dbR^d \times \dbR^d \to \dbR^{d\times d}$ is bounded and Lipschitz continuous;

$\bullet$ $\pa_x \pa_\mu f: \cP_2(\dbR^d) \times \dbR^d \to \dbR^{d\times d}$ exists and it is bounded and Lipschitz continuous.

\ms
(iii)  Let $ C^{1,2}(\Th) := C^{1,2}_{Lip,b}(\Th)$ denote the space of $V: \Th \to \dbR$ such that

$\bullet$ $V(\cdot, \mu) \in C^1([0,T])$, for any $\mu \in \cP_2(\dbR^d)$;

$\bullet$ $V(t, \cdot) \in C^2_{Lip,b}(\cP_2(\dbR^d))$, for any $t \in [0,T]$.
\end{definition}

The following It\^o formula is crucial for the results developed here, see e.g.  \cite{mean_field_associated_pde}  and  \cite{CCD}.
\begin{lemma}
\label{lem-Ito}
Let $V\in C^{1,2}(\Th)$ and $d X_t = b_t dt + \si_t dW_t$, for some $\dbF$-progressively measurable preocesses $b$ and $\si$ such that $\dbE[\int_0^T [|b_t|^2 + |\si_t|^4]dt] <\infty$. Then
\bea
\label{Ito}
{d\over dt} V(t, \cL_{X_t}) = \pa_t V(t, \cL_{X_t}) + \dbE\Big[ \big[b_t \cd \pa_\mu V + {1\over 2}\si\si^\top_t : \pa_x \pa_\mu V\big](t, \cL_{X_t}, X_t)\Big]. 
\eea
\end{lemma}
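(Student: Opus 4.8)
The plan is to prove the It\^o formula \reff{Ito} for the flow of marginal laws $t\mapsto \cL_{X_t}$ by reducing it to the finite-dimensional It\^o formula applied to the lifted functional $U(t,\xi)=V(t,\cL_\xi)$ on the Hilbert space $\dbL^2(\cF_t)$. The key structural fact is the identification, recorded before Definition \ref{defn-C12}, between the Fr\'echet derivative $DU(t,\xi)$ and $\pa_\mu V(t,\mu,\xi)$ with $\mu=\cL_\xi$; for $V\in C^{1,2}(\Th)$ the regularity hypotheses in Definition \ref{defn-C12} guarantee that $U$ is once continuously Fr\'echet differentiable in $\xi$ with a Lipschitz derivative, and that the second-order structure $\pa_x\pa_\mu V$ is bounded and Lipschitz. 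First I would fix a representative $X_t$ on $(\Om,\dbF,\dbP)$ with $dX_t=b_t\,dt+\si_t\,dW_t$, note that under the integrability assumption $\dbE[\int_0^T(|b_t|^2+|\si_t|^4)\,dt]<\infty$ the process $t\mapsto X_t$ is an $\dbL^2(\cF_t)$-valued semimartingale, and rewrite $V(t,\cL_{X_t})=U(t,X_t)$.

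The main step is a Taylor expansion of the increment $U(t+\d,X_{t+\d})-U(t,X_t)$ into a time part and a spatial part, and then to compute the quadratic-variation contribution coming from the martingale part of $X$. I would separate the time increment $U(t+\d,X_t)-U(t,X_t)$, which by \reff{patV} yields $\pa_t V(t,\cL_{X_t})\,\d+o(\d)$, from the spatial increment $U(t+\d,X_{t+\d})-U(t+\d,X_t)$. For the spatial part the natural route is to partition $[t,t+\d]$, apply the first-order Fr\'echet expansion on each subinterval, and collect two types of terms. The first-order term pairs $DU=\pa_\mu V(t,\cL_{X_t},X_t)$ against the increment of $X$; using the Gat\^eaux characterization \reff{pamuV} together with the fact that $\pa_\mu V$ is a genuine function of $(t,\mu,x)$ evaluated at the random variable $X_t$, the drift contribution produces $\dbE[b_t\cd\pa_\mu V(t,\cL_{X_t},X_t)]$, while the stochastic-integral contribution is a true martingale with zero expected increment and disappears after taking $d/dt$. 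The second-order term is where $\pa_x\pa_\mu V$ enters: the quadratic variation of $X_t$ is $\si\si_t^\top\,dt$, and the Hessian structure of $U$ acting on this produces the trace term ${1\over2}\dbE[\si\si_t^\top:\pa_x\pa_\mu V(t,\cL_{X_t},X_t)]$. Crucially, I should explain why the full $\dbL^2$-Hessian of $U$ reduces to the \emph{diagonal} object $\pa_x\pa_\mu V$ rather than involving the genuinely nonlocal $\pa_\mu^2 V$: because the covariation of $X$ at a single time is supported on the diagonal (it is the law of a single random variable, not a pair of independent copies), only the derivative of $\pa_\mu V$ in its own spatial argument survives, and the $\pa_\mu^2 V$ term vanishes in the limit. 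This is the standard but subtle point in the McKean--Vlasov It\^o formula.

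The hard part will be controlling the remainder terms uniformly to justify passing from the Riemann-sum/Taylor expansion to the exact identity, and in particular establishing the $o(\d)$ estimates along the partition. This requires the Lipschitz continuity of $\pa_\mu V$ and $\pa_x\pa_\mu V$ in all their arguments (including $\mu$ in the Wasserstein metric \reff{W2}) from Definition \ref{defn-C12}, combined with the moment bounds $\sup_{s\le T}\dbE[|X_s|^2]<\infty$ and the continuity in $\dbL^2$ of $s\mapsto X_s$, to show that the error between $\pa_\mu V$ evaluated at the left endpoint and at intermediate points is negligible. The fourth-moment assumption on $\si$ is precisely what is needed to bound the second-order remainder arising from replacing $\si\si^\top$ increments by their quadratic-variation limit. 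Since this It\^o formula is quoted as known from \cite{mean_field_associated_pde} and \cite{CCD}, I would present the expansion and the diagonal-reduction argument in enough detail to identify the three terms, and defer the routine remainder estimates to those references; the genuinely instructive content is the identification of the coefficients via \reff{pamuV} and the vanishing of the off-diagonal $\pa_\mu^2 V$ contribution.
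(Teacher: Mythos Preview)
The paper does not prove Lemma~\ref{lem-Ito} at all: it is stated as a known result, with the sentence ``see e.g.\ \cite{mean_field_associated_pde} and \cite{CCD}'' immediately preceding the lemma, and no argument is given. Your proposal therefore cannot be compared to a proof in the paper; rather, you are sketching the proof that the cited references carry out.

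That said, your sketch is essentially the standard argument found in those references (lift to $U$ on $\dbL^2$, Taylor-expand along the $\dbL^2$-valued curve $t\mapsto X_t$, and observe that only the diagonal piece $\pa_x\pa_\mu V$ of the second Fr\'echet derivative survives because the quadratic variation is supported on the diagonal). One small point of phrasing: you write that ``the stochastic-integral contribution is a true martingale with zero expected increment and disappears after taking $d/dt$.'' In fact $t\mapsto V(t,\cL_{X_t})=U(t,X_t)$ is already a \emph{deterministic} real-valued function, since $U$ depends on $X_t$ only through its law; so there is no martingale part to speak of in the output, and the $\si\,dW$ contribution vanishes at first order simply because the Fr\'echet pairing $\dbE[\pa_\mu V(t,\cL_{X_t},X_t)\cdot\si_t\,dW_t]$ is the expectation of a mean-zero increment. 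This is what you mean, but the wording could be tightened. Otherwise the identification of the three terms and the reason $\pa_\mu^2 V$ does not appear are correctly explained.
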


The main result of this section is the following verification theorem.

\begin{theorem}
\label{thm-value}
Let Assumption \ref{assum-deterministic1} hold and $V\in C^{1,2}(\Th)$. Then $V$ is the value function defined by \reff{Vt} if and only if $V$ is a classical solution to the  master equation:
\bea
\label{master}
 \pa_t V(t, \mu) +  H(t, \mu, \pa_\mu V, \pa_x \pa_\mu V) =0,\q V(T, \mu) = \dbE[g(\xi)],
\eea
where, for $p:\Th \times \dbR^d\to \dbR^d$ and $q: \Th \times \dbR^d \to \dbR^{d\times d}$,
\bea
\label{Hamiltonian}
&\dis H(t, \mu, p, q):= \sup_{a\in A} h(t, \mu, p, q, a),&\\
&\dis h(t, \mu, p, q, a) := \dbE\Big[b(t, \xi, a) \cd p(t, \mu, \xi) +  \dis {1\over 2} \si\si^\top(t, \xi, a) :  q(t,\mu, \xi) + f(t, \xi, a)\Big].&\nonumber
\eea
Consequently, the above  master equation has at most one  classical solution in $C^{1,2}(\Th)$. 
\end{theorem}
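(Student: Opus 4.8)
The plan is to prove the two implications of the equivalence separately, with uniqueness falling out of the ``solution $\Rightarrow$ value function'' direction. Throughout write $\ol V(t,\mu):=\sup_{\a\in\cA_t}J(t,\xi,\a)$ for the value function defined by \reff{Vt} and $\mu_s:=\cL_{X_s^{t,\xi,\a}}$ for the flow of laws along a control $\a$; note the terminal condition $\ol V(T,\mu)=\dbE[g(\xi)]$ is immediate since $\cA_T$ is trivial and the state does not move on $[T,T]$. The two workhorses are the dynamic programming principle (Theorem \ref{thm-DPP1}) and the It\^o formula (Lemma \ref{lem-Ito}), whose moment requirements are met by the standard $\dbL^2$ estimates for \reff{Ja} together with the boundedness of $\pa_\mu V,\pa_x\pa_\mu V$. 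The key algebraic observation is that for a \emph{deterministic} control the terms produced by Lemma \ref{lem-Ito} assemble exactly into $h$: since $\a_s\in A$ is a single nonrandom value at each fixed $s$, one has $\dbE\big[b(s,X_s,\a_s)\cd\pa_\mu V(s,\mu_s,X_s)+{1\over 2}\si\si^\top(s,X_s,\a_s):\pa_x\pa_\mu V(s,\mu_s,X_s)\big]=h(s,\mu_s,\pa_\mu V,\pa_x\pa_\mu V,\a_s)-\dbE[f(s,X_s,\a_s)]$. This is precisely why deterministic controls are compatible with the Hamiltonian in \reff{Hamiltonian}.

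\textbf{Necessity} ($V\in C^{1,2}$ equals $\ol V$ $\Rightarrow$ it solves \reff{master}). Fix $(t,\mu)$ and $\xi\in\dbL^2_\mu(\cF_t)$. For a constant control $\a\equiv a$ the ``$\ge$'' half of Theorem \ref{thm-DPP1} gives $V(t,\mu)\ge V(t+\d,\mu_{t+\d})+\int_t^{t+\d}\dbE[f]\,ds$; expanding $V(t+\d,\mu_{t+\d})$ by Lemma \ref{lem-Ito}, dividing by $\d$ and letting $\d\downarrow0$ yields $\pa_tV(t,\mu)+h(t,\mu,\pa_\mu V,\pa_x\pa_\mu V,a)\le0$, and $\sup_{a\in A}$ gives $\pa_tV+H\le0$. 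For the reverse inequality the near-optimal half of Theorem \ref{thm-DPP1} produces, for each $\e,\d$, a control $\a^{\e,\d}$ with $V(t,\mu)\le V(t+\d,\mu^{\e,\d}_{t+\d})+\int_t^{t+\d}\dbE[f]\,ds+\e\d$; the same It\^o expansion bounds the right side by $\int_t^{t+\d}[\pa_tV(s,\mu^{\e,\d}_s)+H(s,\mu^{\e,\d}_s,\pa_\mu V,\pa_x\pa_\mu V)]\,ds+\e\d$, and dividing by $\d$ and sending $\d\downarrow0$ then $\e\downarrow0$ gives $\pa_tV+H\ge0$. Legitimizing these limits requires $s\mapsto\pa_tV(s,\mu_s)+H(s,\mu_s,\cdots)$ to be continuous at $s=t$ \emph{uniformly in the control family}, which I would obtain from the joint continuity of $\pa_tV$, the boundedness and Lipschitz continuity of $\pa_\mu V,\pa_x\pa_\mu V$ (Definition \ref{defn-C12}), the uniform-in-$t$ continuity of $b,\si,f$ (Assumption \ref{assum-deterministic1}), and the uniform flow estimate $\cW_2(\mu_s,\mu)\le C\sqrt{s-t}$, standard for \reff{Ja} and holding uniformly over admissible $\a$.

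\textbf{Sufficiency} ($V\in C^{1,2}$ solves \reff{master} $\Rightarrow V=\ol V$). The bound $V\ge\ol V$ is a pure verification computation: for any $\a\in\cA_t$, Lemma \ref{lem-Ito} and the assembling identity give $\frac{d}{ds}V(s,\mu_s)=\pa_tV+h(s,\mu_s,\cdots,\a_s)-\dbE[f(s,X_s,\a_s)]\le-\dbE[f(s,X_s,\a_s)]$, using $\pa_tV+H=0$ and $h\le H$; integrating from $t$ to $T$ and invoking $V(T,\cdot)=\dbE[g(\cdot)]$ gives $V(t,\mu)\ge J(t,\xi,\a)$, hence $V\ge\ol V$ after taking the supremum. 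For $V\le\ol V$ I would construct near-optimal controls while avoiding any measurable-selection machinery by freezing the control on a partition $t=s_0<\cdots<s_n=T$: on $[s_i,s_{i+1})$ pick a \emph{constant} $a_i\in A$ with $h(s_i,\mu_{s_i},\pa_\mu V,\pa_x\pa_\mu V,a_i)\ge H(s_i,\mu_{s_i},\cdots)-\e$ (each $a_i$ existing by definition of the supremum), and propagate $\mu_{s_{i+1}}$ by solving the ordinary, non-McKean--Vlasov, SDE \reff{Ja} on each subinterval. The resulting piecewise-constant $\a$ is admissible, and repeating the It\^o computation with the freezing error $h(s,\mu_s,\cdots,a_i)\ge H(s,\mu_s,\cdots)-\e-2\rho(|\pi|)$ controlled by a modulus $\rho$ of the mesh $|\pi|$ yields $V(t,\mu)\le J(t,\xi,\a)+(\e+2\rho(|\pi|))(T-t)\le\ol V(t,\mu)+(\e+2\rho(|\pi|))(T-t)$; sending $|\pi|\to0$ then $\e\to0$ gives $V\le\ol V$. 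Uniqueness is then automatic: any two classical solutions $V_1,V_2\in C^{1,2}(\Th)$ both equal $\ol V$, so $V_1=V_2$.

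The main obstacle, common to the nontrivial halves of both directions, is the uniform control of the flow of laws: establishing $\cW_2(\mu_s,\mu_{s'})\le C\sqrt{|s-s'|}$ uniformly over admissible controls and deducing from it a uniform modulus of continuity of $h$ and $H$ in $(s,\mu)$. This is exactly what justifies both the $\d\downarrow0$ passage in necessity (where the near-optimal control depends on $\d$) and the freezing error in sufficiency, and the usual thorn of measurably selecting an optimizer of $H$ in continuous time is deliberately sidestepped by the piecewise-constant construction.
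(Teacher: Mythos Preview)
Your proposal is correct and follows essentially the same route as the paper: necessity via the DPP combined with the It\^o formula and a $\d\downarrow 0$ limit justified by the uniform flow estimate $\cW_2(\mu_s,\mu)\le C\sqrt{s-t}$, and sufficiency via the verification inequality $J\le V$ plus a piecewise-constant $\e$-optimal control on a partition to get $V\le\ol V$. The paper organizes the necessity step slightly differently (writing the DPP identity as a single equation $\sup_{\a}\int_t^{t+\d}[\pa_t V+h]\,ds=0$ before splitting into the two inequalities), but the ingredients and the handling of the uniform modulus of continuity are the same.
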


\begin{proof}
We first assume $V\in C^{1,2}(\Th)$ is defined  by \reff{Vt}.  Then clearly $V$ satisfies the terminal condition in \reff{master}. Now fix $(t,\mu)\in \Th$ and $\xi \in \dbL^2_\mu(\cF_t)$. Recall \reff{Hamiltonian} and apply It\^o formula \reff{Ito} on \reff{DPP1} with $t_1 = t, t_2=t+\d$, we have
\bea
\label{DPPIto}
\sup_{\a\in \cA_t} \int_t^{t+\d} \big[\pa_t V(s, \cL_{X_s^{t,\xi,\a}})+ h(s, \cL_{X_s^{t,\xi,\a}}, \pa_\mu V, \pa_x\pa_\mu V, \a_s)\big] ds &=& 0.
\eea
Under Assumption \ref{assum-deterministic1},  $\cW_2(\cL_{X^{t,\xi,\a}_s}, \mu)\le \|X_s^{t,\xi,\a} - \xi\|_{\dbL^2} \le C\sqrt{\d}$, for $s\in [t, t+\d]$, where $C$ may depend on $\|\xi\|_{\dbL^2}$. By the required regularity on $V$, there exists a modulus of continuity function $\rho$ such that, again for $\a\in \cA_t$ and $s\in [t, t+\d]$, 
\bea
\label{Verror}
\left.\ba{lll}
&&\dis |\pa_t V(s, \cL_{X_s^{t,\xi,\a}}) - \pa_t V(t,\mu)| + |\pa_\mu V(s, \cL_{X_s^{t,\xi,\a}}, X_s^{t,\xi,\a}) - \pa_\mu V(t,\mu, \xi)| \\
&&\dis \qq + |\pa_x\pa_\mu V(s, \cL_{X_s^{t,\xi,\a}}, X_s^{t,\xi,\a}) - \pa_x\pa_\mu V(t,\mu, \xi)| \le \rho\big(C\sqrt{\d} + |X_s^{t,\xi,\a} - \xi|\big),\ms \\ 
&&\dis |b(s, X_s^{t,\xi,\a}, \a_s) - b(t, \xi, \a_s)| + |\si(s, X_s^{t,\xi,\a}, \a_s) - \si(t, \xi, \a_s)|\\
&&\dis \qq+ |f(s, X_s^{t,\xi,\a}, \a_s) - f(t, \xi, \a_s)| \le \rho\big(\d+ |X_s^{t,\xi,\a} - \xi|\big).
\ea\right.
\eea
These lead to, for a possibly different modulus of continuity function $\rho'$, 
\bea
\label{herror}
\big|h(s, \cL_{X_s^{t,\xi,\a}}, \pa_\mu V, \pa_x\pa_\mu V, \a_s) - h(t, \mu, \pa_\mu V, \pa_x\pa_\mu V, \a_s)\big|\le \rho'(\d).
\eea
Then, by \reff{DPPIto} we have, when $\d\to 0$,
\beaa
\pa_t V(t, \mu)+ \sup_{\a\in \cA_t} {1\over \d} \int_t^{t+\d} h(t, \mu, \pa_\mu V, \pa_x\pa_\mu V, \a_s) ds &=& o(1).
\eeaa
On one hand, this clearly implies $\pa_t V(t, \mu)+ H(t, \mu, \pa_\mu V, \pa_x\pa_\mu V) \ge 0$. On the other hand, by restricting the above $\a$ to constant functions we obtain $\pa_t V(t, \mu)+ H(t, \mu, \pa_\mu V, \pa_x\pa_\mu V) \le 0$. That is, $V$ satisfies \reff{master}. 

We now assume $V\in C^{1,2}(\Th)$  is a classical solution of \reff{master}, and want to verify \reff{Vt}. Fix $(t,\mu)\in \Th$ and $\xi\in \dbL^2_\mu(\cF_t)$. For any $\a\in \cA_t$, by It\^o formula \reff{Ito} we have
\bea
\label{JV}
J(t,\xi,\a) &=& \dbE[g(X^{t,\xi,\a}_T)] +  \int_t^T \dbE\big[f(s, X_s^{t,\xi,\a},  \a_s)\big] ds\nonumber \\
&=& V(T, \cL_{X^{t,\xi,\a}_T}) +  \int_t^T \dbE\big[f(s, X_s^{t,\xi,\a},  \a_s)\big] ds\nonumber \\
&=& V(t, \mu)+ \int_t^T \big[\pa_t V(s, \cL_{X_s^{t,\xi,\a}})+ h(s, \cL_{X_s^{t,\xi,\a}}, \pa_\mu V, \pa_x\pa_\mu V, \a_s)\big] ds\nonumber\\
&\le&V(t, \mu)+ \int_t^T \big[\pa_t V(s, \cL_{X_s^{t,\xi,\a}})+ H(s, \cL_{X_s^{t,\xi,\a}}, \pa_\mu V, \pa_x\pa_\mu V)\big] ds\nonumber\\
&=& V(t, \mu).
\eea
On the other hand, fix $\e>0$ and $n\ge 1$, and denote $t_i := t + {i\over n}[T-t]$, $i=0,\cds, n$. We construct an $\a^{n,\e}\in \cA_t$ as follows. First, there exists $a^\e_0 \in A$ such that 
\beaa
h(t_0, \mu, \pa_\mu V, \pa_x\pa_\mu V, a^\e_0) \ge H(t_0, \mu, \pa_\mu V, \pa_x\pa_\mu V) - \frac{\e}{T-t}.
\eeaa
 Define $\a^{n,\e}_s := a^\e_0$ for $s\in [t_0, t_1)$. Next, there exists $a^\e_1\in A$ such that 
 \beaa
 h(t_1, \cL_{X^{t,\xi, \a^{n,\e}}_{t_1}}, \pa_\mu V, \pa_x\pa_\mu V, a^\e_1) \ge H(t_1, \cL_{X^{t,\xi, \a^{n,\e}}_{t_1}}, \pa_\mu V, \pa_x\pa_\mu V) - \frac{\e}{T-t}.
 \eeaa
  Define $\a^{n,\e}_s := a^\e_1$ for $s\in [t_1, t_2)$. Repeat the procedure and define $\a^{n,\e}_s$ for $s\in [t_i, t_{i+1})$ for $i=1,\cds, n-1$.  Clearly  $\a^{n,\e}\in \cA_t$.  Now, by the second equality of \reff{JV} and then by  \reff{Verror} and \reff{herror}, as $n\to \infty$, we have
\beaa
&&J(t,\xi,\a^{n,\e}) - V(t, \mu)\\
&=& \sum_{i=0}^{n-1} \int_{t_i}^{t_{i+1}} \big[\pa_t V(s, \cL_{X_s^{t,\xi,\a^{n,\e}}})+ h(s, \cL_{X_s^{t,\xi,\a^{n,\e}}}, \pa_\mu V, \pa_x\pa_\mu V, \a^{n,\e}_s)\big] ds\\
&=&  \sum_{i=0}^{n-1} \int_{t_i}^{t_{i+1}} \big[\pa_t V(t_i, \cL_{X_{t_i}^{t,\xi,\a^{n,\e}}})+ h(t_i, \cL_{X_{t_i}^{t,\xi,\a^{n,\e}}}, \pa_\mu V, \pa_x\pa_\mu V, \a^{n,\e}_s)\big] ds + o(1)\\
&\ge&  \sum_{i=0}^{n-1} \int_{t_i}^{t_{i+1}} \big[\pa_t V(t_i, \cL_{X_{t_i}^{t,\xi,\a^{n,\e}}})+ H(t_i, \cL_{X_{t_i}^{t,\xi,\a^{n,\e}}}, \pa_\mu V, \pa_x\pa_\mu V) -\e\big] ds + o(1)\\
&=& o(1) - \e.
\eeaa
Here the $o(1)$ may depend on $\|\xi\|_{\dbL^2}$ and we have used the fact that 
$$\sup_{t\le s\le T} \|X^{t,\xi,\a^{n,\e}}_s\|_{\dbL^2} \le C[1+\|\xi\|_{\dbL^2}].$$ 
Sending $n\to \infty$, we see that 
\beaa
\sup_{\a\in \cA_t} J(t,\xi,\a) \ge V(t,\mu) -  \e.
\eeaa
By the arbitrariness of $\e$, we obtain the desired inequality, and hence $V$ is indeed the value function defined by \reff{Vt}.
\end{proof}

\begin{remark}
\label{rem-viscosity}
While we shall provide some positive results in the next section, in general it is difficult to expect classical solutions for nonlinear master equations. There have been some studies on viscosity solutions to such master equations. For example, \cite{PW} proposed a notion of viscosity solution by first lifting the function $V$ to $U$ in the sense of \reff{UV} and then studing the viscosity property of $U$ in the Hilbert space $\dbL^2(\cF_T)$. More recently, \cite{WZ} proposed an intrinsic notion of viscosity solutions in the Wasserstein space directly, which, in particular, is consistent with the classical solution in Theorem \ref{thm-value} when $V$ is smooth.  
\end{remark}

\subsection{The optimal control}
We now turn to the optimal control. 

\begin{theorem}
\label{thm-control}
Let Assumption \ref{assum-deterministic1} hold and $V\in C^{1,2}(\Th)$ be the classical solution to the master equation \reff{master}-\reff{Hamiltonian}.
   Assume further that

\begin{enumerate}[(i)]

\item the Hamiltonian $H(t, \mu, \pa_\mu V, \pa_x\pa_\mu V)$ defined by \reff{Hamiltonian} has an optimal control $a^* = I(t,\mu)\in A$, for any $(t,\mu)\in \Th$, where $I: [0, T]\times \cP_2(\dbR^d) \to A$ is measurable;

\item for a fixed $(t,\mu)\in \Th$ and $\xi\in \dbL^2_\mu(\cF_t)$, the McKean-Vlasov SDE,
\bea
\label{MV}
X^*_s = \xi + \int_t^s b\big(r, X^*_r, I(r, \cL_{X^*_r})\big) dr +  \int_t^s \si\big(r, X^*_r, I(r, \cL_{X^*_r})\big) dW_r.
\eea
\no has a (strong) solution $X^*$;


\end{enumerate}

\no Then $\a^*_s := I(s, \cL_{X^*_s})$, $s\in [t, T]$,  is an optimal control for the optimization problem \reff{Vt} with this fixed $(t,\mu)$. 
\end{theorem}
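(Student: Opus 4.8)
The plan is to show that the candidate $\a^*$ attains the upper bound $V(t,\mu)$, thereby upgrading to an equality the inequality $J(t,\xi,\a)\le V(t,\mu)$ already established for every admissible $\a$ in the verification part of Theorem \ref{thm-value}. First I would check that $\a^*$ is genuinely admissible. Since $\cL_{X^*_s}$ is a deterministic probability measure for each $s$, the map $s\mapsto \a^*_s=I(s,\cL_{X^*_s})$ is a deterministic function of time alone, so it is a legitimate candidate element of $\cA_t$; its Borel measurability follows from the measurability of $I$ in hypothesis (i) together with the continuity of $s\mapsto \cL_{X^*_s}$ in $\cW_2$, which is standard for SDE flows. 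This is the key structural point that makes the deterministic problem \reff{Vt} consistent with feedback-through-the-law: along the optimal trajectory the dependence on the measure collapses to an open-loop deterministic control.

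Next I would identify the McKean--Vlasov solution $X^*$ with the controlled state process driven by $\a^*$. Once $\a^*$ is frozen as the deterministic function above, the McKean--Vlasov SDE \reff{MV} becomes an ordinary SDE whose coefficients $b(r,\cd,\a^*_r)$, $\si(r,\cd,\a^*_r)$ coincide with those defining $X^{t,\xi,\a^*}$ in \reff{Ja}. By the Lipschitz hypotheses of Assumption \ref{assum-deterministic1} this SDE has a unique strong solution, hence $X^*_s=X^{t,\xi,\a^*}_s$ for all $s$, and therefore $J(t,\xi,\a^*)=\dbE[g(X^*_T)]+\int_t^T\dbE[f(s,X^*_s,\a^*_s)]ds$.

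The core computation is then an application of the It\^o formula \reff{Ito} to $s\mapsto V(s,\cL_{X^*_s})$ along $X^*$, exactly as in \reff{JV}, which gives
\[
\frac{d}{ds}V(s,\cL_{X^*_s})=\pa_t V(s,\cL_{X^*_s})+h\big(s,\cL_{X^*_s},\pa_\mu V,\pa_x\pa_\mu V,\a^*_s\big)-\dbE\big[f(s,X^*_s,\a^*_s)\big].
\]
The crucial step is that, because $\a^*_s=I(s,\cL_{X^*_s})$ is by hypothesis (i) the maximizer of the Hamiltonian at $(s,\cL_{X^*_s})$, one has $h(s,\cL_{X^*_s},\pa_\mu V,\pa_x\pa_\mu V,\a^*_s)=H(s,\cL_{X^*_s},\pa_\mu V,\pa_x\pa_\mu V)$, so the master equation \reff{master} forces $\pa_t V+H=0$ and the right-hand side reduces to $-\dbE[f(s,X^*_s,\a^*_s)]$. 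Integrating from $t$ to $T$ and invoking the terminal condition $V(T,\cd)=\dbE[g(\cd)]$ yields $V(t,\mu)=\dbE[g(X^*_T)]+\int_t^T\dbE[f(s,X^*_s,\a^*_s)]ds=J(t,\xi,\a^*)$, which together with $J(t,\xi,\a)\le V(t,\mu)$ shows $\a^*$ is optimal.

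I expect the main obstacle to be technical rather than conceptual: justifying the application of Lemma \ref{lem-Ito}, which requires the integrability condition $\dbE[\int_t^T(|b_s|^2+|\si_s|^4)ds]<\infty$ for $b_s=b(s,X^*_s,\a^*_s)$ and $\si_s=\si(s,X^*_s,\a^*_s)$, together with the measurability and $\cW_2$-continuity of $s\mapsto\cL_{X^*_s}$ that are needed both for admissibility of $\a^*$ and for \reff{Ito} to be valid. These follow from the linear-growth bounds implied by Assumption \ref{assum-deterministic1} and standard moment estimates for the (McKean--Vlasov) SDE; the only mild subtlety is securing the fourth-moment control of $\si_s$, which one obtains from the Lipschitz-in-$x$ and boundedness-at-$0$ assumptions once the relevant moments of $\xi$ are finite.
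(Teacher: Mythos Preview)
Your proposal is correct and follows essentially the same approach as the paper: identify $X^*=X^{t,\xi,\a^*}$, apply the It\^o formula as in \reff{JV}, and observe that hypothesis (i) turns the inequality there into an equality since $h(\cdot,\a^*_s)=H(\cdot)$ along the optimal trajectory. The paper compresses all of this into two lines by simply citing \reff{JV}, whereas you have spelled out the admissibility check and the integrability caveats for Lemma~\ref{lem-Ito} more carefully.
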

\begin{proof} Note that $X^* = X^{t, \xi, \a^*}$. Set $\a=\a^*$ in \reff{JV}. By optimality condition $(i)$ we see that equality holds for \reff{JV}, namely $J(t, \xi, \a^*) = V(t,\mu)$, implying  that $\a^*$ is optimal. 
\end{proof}

 As in standard control theory, in general, the existence of  the classical solution $V$ is not sufficient for the existence of the optimal control. In particular, the McKean-Vlasov SDE \reff{MV} may not have a solution, even if $I$ exists. At  below we provide a sufficient condition.

\begin{theorem}
\label{thm-control-sufficient}
Let all the conditions in Theorem  \ref{thm-control} hold true, except possibly the (ii) there. Assume further $b, \si$ are bounded and continuous in $a$,  and $I: \Th \to A$ is continuous.
Then the  McKean-Vlasov SDE \reff{MV} has a strong solution for any $(t,\mu)$, and hence  the optimization problem \reff{Vt} has an optimal control.
\end{theorem}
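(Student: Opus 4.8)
The plan is to obtain a strong solution of \reff{MV} by a Schauder--Tychonoff fixed point argument on flows of marginal laws, rather than by the usual Picard/contraction scheme: the latter is unavailable here because the coefficients depend on $\mu$ only through the \emph{continuous} map $I$, with no Lipschitz (Wasserstein) control. Fix $(t,\mu)$ and $\xi\in\dbL^2_\mu(\cF_t)$. For a continuous curve of measures $m=(m_r)_{r\in[t,T]}$ with $m_t=\mu$, freeze the law and set $b^m(r,x):=b(r,x,I(r,m_r))$ and $\si^m(r,x):=\si(r,x,I(r,m_r))$. By Assumption \ref{assum-deterministic1} these are uniformly Lipschitz and of linear growth in $x$, so the frozen SDE $X^m_s=\xi+\int_t^s b^m(r,X^m_r)\,dr+\int_t^s\si^m(r,X^m_r)\,dW_r$ has a unique strong solution on the given space. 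Define $\Phi(m)_s:=\cL_{X^m_s}$; a fixed point $m^*=\Phi(m^*)$ satisfies $\cL_{X^{m^*}_r}=m^*_r$, so $X^{m^*}$ solves \reff{MV} and is the desired strong solution.

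Next I set up the candidate domain. Let $\cK$ be the set of continuous curves $m:[t,T]\to(\cP_2(\dbR^d),\cW_2)$ with $m_t=\mu$, a uniform second-moment bound $\sup_r\int|x|^2\,dm_r\le C_0$, a uniform square-integrability modulus $\sup_r\int_{|x|>M}|x|^2\,dm_r\le\eta(M)$ with $\eta(M)\downarrow0$, and a uniform H\"older bound $\cW_2^2(m_s,m_{s'})\le C_1|s-s'|$. Viewing $\cP_2(\dbR^d)$ inside the vector space of finite signed measures, mixtures are genuine convex combinations, and since $\cW_2^2$ is jointly convex while the moment constraints are linear, $\cK$ is convex. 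By the characterization of $\cW_2$-relative compactness (tightness plus uniform square-integrability) together with Arzel\`a--Ascoli in time, $\cK$ is compact for the topology of uniform $\cW_2$-convergence. To see $\Phi(\cK)\subseteq\cK$, I use that $b,\si$ are bounded: writing $X^m_s=\xi+Y^m_s$ with $Y^m_s:=\int_t^s b^m\,dr+\int_t^s\si^m\,dW$, the fluctuation $Y^m$ has all moments bounded uniformly in $m$, which gives the second-moment and H\"older bounds $\|X^m_s-X^m_{s'}\|_{\dbL^2}^2\le C|s-s'|$; and since $|X^m_s|^2\le 2|\xi|^2+2|Y^m_s|^2$ with $|\xi|^2$ a single integrable function and $\{|Y^m_s|^2\}$ uniformly integrable, the family $\{\cL_{X^m_s}\}$ carries a common modulus $\eta$. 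Choosing $C_0,C_1,\eta$ accordingly, $\Phi$ maps $\cK$ into itself.

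It remains to prove $\Phi$ is continuous on $\cK$. If $m^{(k)}\to m$ uniformly in $\cW_2$, then $m^{(k)}_r\to m_r$ in $\cW_2$ for each $r$; continuity of $I$ and of $b,\si$ in $a$ give $b^{m^{(k)}}(r,x)\to b^m(r,x)$ (and likewise for $\si$) for every $(r,x)$, with a uniform bound. A standard Gronwall estimate for the difference of the two frozen SDEs, using the Lipschitz-in-$x$ property together with the dominated-convergence fact $\int_t^T\dbE\big[|b^{m^{(k)}}(r,X^m_r)-b^m(r,X^m_r)|^2+|\si^{m^{(k)}}(r,X^m_r)-\si^m(r,X^m_r)|^2\big]\,dr\to0$, yields $\sup_s\dbE|X^{m^{(k)}}_s-X^m_s|^2\to0$, hence $\sup_s\cW_2(\Phi(m^{(k)})_s,\Phi(m)_s)\to0$. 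With $\cK$ a compact convex subset of a locally convex space and $\Phi:\cK\to\cK$ continuous, the Schauder--Tychonoff theorem furnishes a fixed point $m^*$, and $X^{m^*}$ is a strong solution of \reff{MV}. Finally, since $I$ is continuous (hence measurable) and \reff{MV} is now solvable, all hypotheses of Theorem \ref{thm-control} hold, so $\a^*_s=I(s,\cL_{X^*_s})$ is an optimal control and \reff{Vt} admits an optimizer.

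The main obstacle is precisely the lack of Wasserstein-Lipschitz dependence on the measure, which forces the compactness-based fixed point in place of a contraction. The delicate technical point is obtaining \emph{$\cW_2$} (not merely weak) compactness and continuity: this needs the uniform square-integrability of the marginals, which I extract from the boundedness of $b,\si$ through the decomposition $X^m=\xi+Y^m$ isolating the fixed, law-$\mu$ part from the uniformly controlled fluctuation. One should also verify carefully that on $\cK$ the uniform-$\cW_2$ topology is the trace of a locally convex vector topology on signed-measure curves, so that Schauder--Tychonoff genuinely applies.
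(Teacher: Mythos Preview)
Your proof is correct in spirit and reaches the same conclusion, but the route differs from the paper's in an instructive way. Both arguments rely on Schauder's fixed-point theorem; the difference is the space on which the fixed point is sought. You work on curves of marginal laws $m:[t,T]\to\cP_2(\dbR^d)$ and define $\Phi(m)=(\cL_{X^m_s})_s$; the paper instead exploits the special structure that the McKean--Vlasov dependence enters only through the deterministic control $\a_s=I(s,\cL_{X_s})$, and sets up the fixed point directly on the space of \emph{controls}. Concretely, the paper shows that all reachable marginals lie in a $\cW_2$-compact set $\cD_L(\mu)$, so $I$ is uniformly continuous there with some modulus $\rho_\mu$; then $\Phi(\a)_s:=I(s,\cL_{X^\a_s})$ maps the equicontinuous set $\cA_0(\rho_\mu):=\{\a:|\a_t-\a_s|\le\rho_\mu(|t-s|)\}$ into itself, and Schauder applies in the Banach space $C([0,T];\dbR^k)$ with Arzel\`a--Ascoli giving compactness.

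The payoff of the paper's choice is precisely the point you flag at the end: by working in a genuine Banach space of $A$-valued paths, the convex--compact hypothesis of Schauder is immediate, and there is no need to verify that the uniform-$\cW_2$ topology on $\cK$ is the trace of a locally convex vector topology on signed-measure curves. Your approach is the more standard McKean--Vlasov template and would generalize to situations where the law enters the coefficients in ways not factoring through a finite-dimensional control; but in the present setting the paper's reduction to controls is shorter and avoids the functional-analytic subtlety you correctly identify. Your handling of $\cW_2$-compactness via uniform square-integrability (through the decomposition $X^m=\xi+Y^m$) and the continuity of $\Phi$ via Gronwall plus dominated convergence is sound.
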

\begin{proof}  Without loss of generality, we prove the result only at $(0, \mu)$. Fix $\xi \in \dbL^2_0(\mu)$.
For any $\a\in \cA_0$, denote
\beaa
X^\a_t = \xi + \int_0^t b(s, X^\a_s, \a_s) ds + \int_0^t \si(s, X^\a_s, \a_s) dW_s.
\eeaa
Under Assumption \ref{assum-deterministic1}, it is clear that 
\bea
\label{cLXcont}
\dbE[|X^\a_t - X^\a_s|^2] \le C_\mu |t-s|,
\q\mbox{and thus}\q \cW_2(\cL_{X^\a_t}, \cL_{X^\a_s}) \le C_\mu \sqrt{|t-s|},
\eea
where the constant $C_\mu$ may depend on $\mu$, but does not depend on $\a$. Moreover, assume $|b|, |\si|\le L$.  Let $\cD_L(\mu)$ denote the set of $\cL_{\tilde X_t}$, where  $t\in [0, T]$, $\tilde X_t = \tilde X_0 + \int_0^t \tilde b_s ds + \int_0^t \tilde\si_s dW_s$ in some arbitrary probability space with $\cL_{\tilde X_0} = \mu$ and  $|\tilde b|, |\tilde \si|\le L$. As in \cite{WZ} Lemma 3.1, one can easily show that $\cD_L(\mu)$ is compact under $\cW_2$. Since $I$ is continuous in $\Th$, then it is uniformly continuous on $[0, T]\times \cD_L(\mu)$ with certain  modulus of continuity function $\rho_\mu$, which may depend on $\mu$. Clearly $\cL_{X^\a_t} \in \cD_L(\mu)$ for all $\a\in \cA_0$ and $t\in [0, T]$. Then we have
\bea
\label{Icont}
\Big|I(t, \cL_{X^\a_t}) - I(s, \cL_{X^\a_s}) \Big|   \le \rho_\mu(|t-s|),\q\mbox{for all}~\a\in \cA_0.
\eea
Denote 
\bea
\label{cArho}
\cA_0(\rho_\mu) := \Big\{\a\in \cA_0: |\a_t - \a_s|\le  \rho_\mu(t-s),\q 0\le s<t\le T\Big\}.
\eea
We now define a mapping  $\Phi: \cA_0(\rho_\mu)\to \cA_0(\rho_\mu)$ by $\Phi_t(\a) := I(t, \cL_{X^\a_t})$, where \reff{Icont} ensures that $\Phi(\a) \in  \cA_0(\rho_\mu)$ for all $\a\in  \cA_0(\rho_\mu)$.  One can easily show  that $ \cA_0(\rho_\mu)$ is convex and compact under the uniform norm, and $\Phi$ is continuous. Then, applying the Schauder's fixed point theorem, $\Phi$ has a fixed point $\a^*\in \cA_0(\rho_\mu)$: $\Phi(\a^*) = \a^*$. Now it is clear that $X^* := X^{\a^*}$ satisfies \reff{MV}, and hence $\a^*$ is  an optimal control. 
\end{proof}

\begin{remark}
\label{rem-SMP}
{\rm In this section, we used the dynamic programming principle. Since the control here is deterministic and thus falls in strong formulation, one may also use the stochastic maximum principle, provided the optimal control exists. We will present heuristic arguments in Appendix \ref{sect-appendix} to show how the McKean-Vlasov SDEs come to play naturally.
}
\end{remark}

\section{Classical solution of a nonlinear master equation}
\label{sect-example}
\setcounter{equation}{0}

 The existence of classical solutions for nonlinear master equations is a very challenging problem. We shall leave the general case to future research. In this section we study a special type of master equations. Consider the equation \reff{master}-\reff{Hamiltonian} with
\beaa
 \si = I_d,\q b = b(t, a), \q f=f(t,a).
\eeaa
Then \reff{master} becomes:
\beaa
\pa_t V (t,\mu) + {1\over 2} \dbE\Big[\tr(\pa_x \pa_\mu V(t, \mu, \xi))\Big] + \sup_a \Big[ b(t,a)\cd \dbE\big[\pa_\mu V(t, \mu, \xi)\big] + f(t,a)\big]=0.
\eeaa
This is a special case of the following nonlinear master equation:
\bea
\label{master2}
\left.\ba{c}
\dis \pa_t V (t,\mu) + {1\over 2} \dbE\Big[\tr(\pa_x \pa_\mu V(t, \mu, \xi))\Big] + F\big(t,  \dbE\big[\pa_\mu V(t, \mu, \xi)\big]\big)=0,\\
  V(T,\mu) = \dbE\big[g(\xi)\big].
\ea\right.
\eea

\begin{theorem}
\label{thm-classical}
Let  $F$ and $g$ be smooth enough with bounded derivatives. Assume one of the following two conditions hold true:

(i) $T$ is sufficiently small;

(ii) $d=1$; and  either $\pa_{xx} g >0>\pa_{xx} F$, or $\pa_{xx} g < 0<\pa_{xx} F$.

\no Then the master equation \reff{master2} has a classical solution $V\in C^{1,2}(\Th)$.
\end{theorem}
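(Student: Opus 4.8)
The plan is to exploit the very special structure of \eqref{master2}: the second-order term is \emph{linear} in $\mu$ (it is the Wasserstein Laplacian), and the only nonlinearity enters through the single vector $\dbE[\pa_\mu V]\in\dbR^d$. First I would reduce the problem to finite dimensions. For these coefficients the controlled state is $X_s=\xi+\int_t^s b(r,\alpha_r)\,dr+(W_s-W_t)$ with \emph{deterministic} drift, so the common shift $y=\int_t^T b(r,\alpha_r)\,dr$ acts on every realisation of $\xi$ simultaneously, while the noise contributes an independent Gaussian. Writing $\widehat g(t,x):=\dbE[g(x+W_T-W_t)]$, which solves the backward heat equation $\pa_t\widehat g+\frac12\Delta\widehat g=0$, $\widehat g(T,\cdot)=g$, this suggests the representation
\[
V(t,\mu)=\sup_{y\in\dbR^d}\Big[\int_{\dbR^d}\widehat g(t,x+y)\,\mu(dx)+\cR(t,y)\Big],
\]
where $\cR$ is the value of the accompanying first-order Hamilton--Jacobi equation $\pa_t\cR(t,y)+F\big(t,-\nabla_y\cR(t,y)\big)=0$ with the singular terminal ``tip'' datum concentrated at $y=0$ (so that $V(T,\mu)=\int g\,d\mu$). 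Note that the linear guess $\int v\,d\mu$ cannot work: Jensen's inequality shows it only solves \eqref{master2} with $\dbE[F(\pa_\mu V)]$ in place of $F(\dbE[\pa_\mu V])$, so the supremum over a \emph{common} shift is precisely what produces the correct nonlinearity and forces $V$ to be genuinely nonlinear in $\mu$.

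The analytic core is the global smoothness of the finite-dimensional ingredients, and this is where conditions (i) and (ii) enter. Equivalently one solves the semilinear PDE $\pa_t v+\frac12\Delta v+F(t,\nabla v)=0$, $v(T,\cdot)=g$, whose point-mass value is $V(t,\d_x)=v(t,x)=\sup_y[\widehat g(t,x+y)+\cR(t,y)]$. Under (i) this is standard short-time well-posedness for semilinear parabolic equations, in any dimension: a contraction mapping on a short interval gives a classical solution with derivatives controlled by the bounded derivatives of $F$ and $g$. Under (ii) I would instead obtain a \emph{global} bound on the gradient and Hessian in the scalar case $d=1$: differentiating the equation twice, $q:=\pa_{xx}v$ solves the Riccati-type reaction--diffusion equation $\pa_t q+\frac12\pa_{xx}q+(\pa_x F)\,\pa_x q+(\pa_{xx}F)\,q^2=0$ with $q(T,\cdot)=\pa_{xx}g$, where $\pa_x F,\pa_{xx}F$ denote derivatives of $F$ in its second argument. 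When $\pa_{xx}g>0>\pa_{xx}F$ (resp. $\pa_{xx}g<0<\pa_{xx}F$) the reaction term $(\pa_{xx}F)\,q^2$ has the sign that drives $q$ \emph{towards} zero, so a comparison with the scalar ODE $\dot Q=(\pa_{xx}F)\,Q^2$ keeps $\pa_{xx}v$ bounded and of constant sign on all of $[0,T]$, which prevents the gradient catastrophe that would otherwise occur. I expect this a priori estimate to be the main obstacle, since the opposite sign combinations genuinely produce finite-time blow-up of $\nabla v$, which is exactly why only small $T$ is admitted in (i).

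With $\widehat g$ and $\cR$ smooth and with the above bounds in hand, the next step is to solve the inner optimisation. The map $y\mapsto\int\widehat g(t,x+y)\,\mu(dx)+\cR(t,y)$ has second derivative $\int\pa_{xx}\widehat g(t,x+y)\,\mu(dx)+\pa_{yy}\cR(t,y)$; the sign structure preserved above (the heat flow preserves the sign of $\pa_{xx}g$, and the Hamilton--Jacobi/Legendre structure fixes the sign of $\pa_{yy}\cR$) makes this strictly concave, so there is a unique maximiser $y^*(t,\mu)$. Because the first-order condition $\int\nabla\widehat g(t,x+y^*)\,\mu(dx)+\nabla_y\cR(t,y^*)=0$ depends on $\mu$ only through the linear functional $\mu\mapsto\int\nabla\widehat g(t,x+\cdot)\,\mu(dx)$, the implicit function theorem gives $y^*$ jointly $C^1$ in $t$ and Lipschitz/differentiable in $\mu$ in the Wasserstein sense, with bounded derivatives.

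Finally I would verify the regularity and the equation. By the envelope theorem the $y^*$-derivatives drop out, giving $\pa_\mu V(t,\mu,x)=\nabla_x\widehat g(t,x+y^*(t,\mu))$, $\pa_x\pa_\mu V(t,\mu,x)=\nabla_x^2\widehat g(t,x+y^*(t,\mu))$, and $\pa_t V(t,\mu)=\int\pa_t\widehat g(t,x+y^*)\,\mu(dx)+\pa_t\cR(t,y^*)$; the bounds on $\widehat g$ and on the derivatives of $y^*$ then place $V$ in $C^{1,2}(\Th)$ in the sense of Definition \ref{defn-C12}. Substituting into \eqref{master2} and using $\pa_t\widehat g+\frac12\Delta\widehat g=0$ together with the first-order condition $\dbE[\pa_\mu V]=-\nabla_y\cR(t,y^*)$ and the Hamilton--Jacobi equation for $\cR$ shows the master equation holds, while the tip datum yields the terminal condition. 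This confirms that $V$ is a classical solution; alternatively, once the candidate is known to lie in $C^{1,2}(\Th)$, one may invoke the Itô formula (Lemma \ref{lem-Ito}) and the verification argument of Theorem \ref{thm-value}.
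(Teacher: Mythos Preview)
Your route is genuinely different from the paper's. The paper never writes the sup-over-shifts formula or the semilinear PDE for $v$; instead it decouples by introducing the scalar $\widetilde V(t,\mu):=\dbE[\pa_\mu V(t,\mu,\xi)]$, observes that $\widetilde V$ must solve the quasilinear master equation \eqref{master2-1}, and constructs $\widetilde V$ as the fixed point of the explicit map $\Phi(y;t,\mu)=\dbE\big[\pa_x g\big(\xi+W_T-W_t+\int_t^T\pa_x F(s,y)\,ds\big)\big]$. Under (ii) the sign hypothesis makes $\Phi$ strictly decreasing in $y$, so $\Phi(U;t,\mu)=U$ has a unique smooth solution by the implicit function theorem; $V$ is then recovered from the \emph{linear} master equation \eqref{master2-2} with source $F(t,\widetilde V)$, and one checks $\dbE[\pa_\mu V]=\widetilde V$. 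Your Riccati estimate on $\pa_{xx}v$ is the finite-dimensional shadow of the paper's monotonicity of $\Phi$; your approach has the merit of an explicit variational formula for $V$, while the paper's avoids the Hamilton--Jacobi function $\cR$ and its singular terminal datum altogether.

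There is, however, a genuine gap under (i). Your passage from $v(t,x)=V(t,\d_x)$ to $V(t,\mu)$ on arbitrary $\mu$ runs through the representation $V(t,\mu)=\sup_y\big[\int\widehat g(t,x+y)\,\mu(dx)+\cR(t,y)\big]$, and this formula is only available via the Legendre/control structure, i.e.\ when $F(t,\cdot)$ is convex (or concave). Condition (i) assumes nothing about the convexity of $F$, only that $T$ is small; without convexity the sup-representation is not defined, and the construction of $\cR$ from its ``singular tip datum'' is precisely where convexity would be used. Short-time wellposedness of $\pa_t v+\tfrac12\Delta v+F(t,\nabla v)=0$ gives you $V$ only on Dirac masses and does not determine $V$ on $\cP_2(\dbR^d)$. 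The paper handles (i) by applying short-time contraction directly to \eqref{master2-1} on Wasserstein space (no convexity needed), after which the linear Step~1 rebuilds $V$; if you want to keep your architecture you need a separate argument for (i) that does not lean on the sup-over-shifts.
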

\begin{proof} We shall proceed in two steps.

{\it Step 1. } 
Consider the following master equation which is linear in $\pa_\mu \widetilde V$:
\bea
\label{master2-1}
\left.\ba{c}
\dis \pa_t \widetilde V(t,\mu) + {1\over 2} \dbE\Big[\tr(\pa_x \pa_\mu \widetilde V(t, \mu, \xi))\Big] + \pa_x F\big(t, \widetilde V(t,\mu)\big)   \dbE\big[\pa_\mu \widetilde V(t, \mu, \xi)\big]=0,\\
 \widetilde V(T,\mu) = \dbE\big[\pa_xg(\xi)\big].
\ea\right.
\eea
We shall prove in Step 2 below that, under (i) or (ii) the above master equation has a unique classical solution $\widetilde V$. We next consider the linear master equation:
\bea
\label{master2-2}
\left.\ba{c}
\dis \pa_t V (t,\mu) + {1\over 2} \dbE\Big[\tr(\pa_x \pa_\mu V(t, \mu, \xi))\Big] + F\big(t,  \widetilde V(t, \mu)\big)=0,\\
V(T,\mu) = \dbE\big[g(\xi)\big],
\ea\right.
\eea
Then clearly $V$ is also smooth. It remains to verify that the above $V$ satisfies \reff{master2}. Indeed, by \reff{master2-2} we see that, 
\bea
\label{linearrep}
V(t,\mu) = \dbE\big[g(X^{t,\xi}_T)\big] + \int_t^T F(s, \widetilde V(s, \cL_{X^{t,\xi}_s})) ds,
\eea
where $X^{t,\xi}_s := \xi + W_s-W_t$. Differentiating with respect to $\mu$, we obtain
\bea
\label{lineardiff}
\dbE\big[\pa_\mu V(t, \mu,\xi)\big] &=& \dbE\Big[\pa_x g(X^{t,\xi}_T)\Big] \\
&+& \dbE\Big[\int_t^T \pa_x F(s, \widetilde V(s, \cL_{X^{t,\xi}_s}))  \cd \pa_\mu\widetilde V\big(s, \cL_{X^{t,\xi}_s}, X^{t,\xi}_s\big) ds\Big]. \nonumber
\eea
That is, $\ol V(t,\mu) := \dbE\big[\pa_\mu V(t, \mu,\xi)\big] $ satisfies the following linear master equation:
\beaa
\left.\ba{c}
\dis \pa_t \ol V(t,\mu) + {1\over 2} \dbE\Big[\tr(\pa_x \pa_\mu \ol V(t, \mu, \xi))\Big] + \pa_x F\big(t, \widetilde V(t,\mu)) \cd  \dbE\big[\pa_\mu \widetilde V(t, \mu, \xi)\big]\big)=0,\\
 \ol V(T,\mu) = \dbE\big[\pa_xg(\xi)\big].
\ea\right.
\eeaa
However, by  \reff{master2-1}, $\widetilde V$ also satisfies the above master equation. Then by the uniqueness of classical solutions, we have  $\widetilde V(t,\mu) =\ol V(t,\mu) =  \dbE\big[\pa_\mu V(t, \mu,\xi)\big]$. Plugging this into \reff{master2-2}, we see that $V$ satisfies \reff{master2}.

\ms

{\it Step 2.} We now prove the wellposedness of \reff{master2-1} under (i) or (ii). When $T$ is small, the arguments are rather standard, see e.g. \cite{CCD}. We now assume (ii) holds true. Without loss of generality, we assume $F$ is convex in $x$ and $g$ is concave.  For any $y\in \dbR$, define
\bea
\label{Phi}
\left.\ba{c}
\dis \Phi(y; t, \mu) := \dbE\Big[\pa_x g\big(\xi +  W_{T}-W_t+ \int_t^T \pa_x F(s,y)ds\big)\Big],\\
\dis \Psi(y, t,\mu) := \Phi(y; t,\mu)-y,
\ea\right.
\eea
where $\cL_\xi = \mu$ and $W_T-W_t$ is independent of $\xi$.  It is straightforward to show  that $\Phi$ is smooth in $(y, t,  \mu)$ and, for any $y$, $\Phi(y; \cd)$ solves the following linear master equation:
\bea
\label{nonlinearMaster3}
 \pa_t \Phi (y; t, \mu) + {1\over 2} \dbE\big[\pa_x \pa_\mu \Phi(y; t,  \mu, \xi))\big] + \pa_x F(t, y) \dbE\big[\pa_\mu \Phi(y; t,\mu, \xi)\big]=0.
 \eea
Under our conditions, $\pa_x g$ is decreasing and $\pa_x F$ is increasing in $y$, then by \reff{Phi} $\Phi$ is decreasing in $y$ and thus  $\pa_y \Psi\le -1$, so $y \mapsto \Psi(y, t, \mu)$ has an inverse function $\Psi^{-1}$, which is also smooth.  Since $\pa_x g$ is bounded by some constant $C_0$, then $|\Phi(y; t, \mu)|\le C_0$, and thus $\Psi(C_0, t, \mu) \le 0 \le \Psi(-C_0, t, \mu)$ for any fixed $(t,\mu)$. In particular, $0$ is in the range of $\Psi(\cd; t,\mu)$ for any fixed $(t,\mu)$. Define $U(t,\mu) := \Psi^{-1}(0, t,\mu)$, then $U$ is smooth. 
Note that $U(t, \mu) = \Phi(U(t,\mu); t,  \mu)$. Apply the chain rule (which is obvious from the definitions), we have
 \beaa
 \pa_t U = \pa_t \Phi + \pa_y \Phi \pa_t U,\q \pa_\mu U = \pa_\mu \Phi + \pa_y \Phi \pa_\mu U,\q \pa_x\pa_\mu U = \pa_x\pa_\mu \Phi + \pa_y \Phi \pa_x\pa_\mu U.
 \eeaa
 Namely, denoting $c := 1-\pa_y \Phi(U(t,\mu); t, \mu) \ge 1$,
 \beaa
 & \pa_t \Phi(U(t,\mu); t, \mu) = c~ \pa_t U(t,\mu), \q \pa_\mu \Phi(U(t,\mu); t, \mu,\cd) = c~ \pa_\mu U(t,\mu,\cd), &\\
&    \pa_x\pa_\mu \Phi(U(t,\mu); t, \mu,\cd) = c~ \pa_x\pa_\mu U(t,\mu,\cd).&
  \eeaa
Plug these into \reff{nonlinearMaster3} with $y=U(t,\mu)$, we obtain that $U$ satisfies \reff{master2-1}.
\end{proof}


\subsection{An example}
\label{sect-eg}
We now consider a special case. For some $R>0$, which will be specified later, set
\bea
\label{example}
d=1,\q A = [-R, R],\q b(t, x, a) = a,\q \si = 1, \q f(t,x, a) = -{1\over 2} a^2.
\eea
Then
\bea
\label{eg-H}
\left.\ba{c}
\dis h(t,\mu, p, q, a) = {1\over 2} \dbE[q(t,\mu,\xi)] + a \dbE[p(t,\mu,\xi)] - {1\over 2} a^2,\\
\dis H(t,\mu, p,q) =  {1\over 2} \dbE[q(t,\mu,\xi)] + F(\dbE[p(t,\mu,\xi)]),\\
\dis \mbox{where}~ F(x) = {1\over 2}|x|^2\1_{\{|x|\le R\}} + [R|x| - {1\over 2} R^2 ] \1_{\{|x|> R\}}.
\ea\right.
\eea
and thus \reff{master} becomes:
\bea
\label{master-quadratic}
\pa_t V +   {1\over 2} \dbE\big[\pa_x \pa_\mu V(t,\mu,\xi)\big] + F\big(\dbE\big[\pa_\mu V(t,\mu,\xi)\big) = 0,\q V(T,\mu) = \dbE[g(\xi)].
\eea
Notice that $F$ is convex,   however, it  is in $C^1(\dbR)$ but not in $C^2(\dbR)$.

\begin{theorem}
\label{thm-eg}
Assume $g$ is smooth  enough with bounded derivatives, and in particular $|\pa_x g| \le C_0 < R$. Then,   either for $T$ small enough, or $d=1$ and $g$ is concave, 

(i) the master equation \reff{master-quadratic} has a unique classical solution $V$ such that 
\bea
\label{VLipschitz}
\Big|\dbE\big[\pa_\mu V(t,\mu,\xi)\big]\Big|\le C_0,\q (t,\mu) \in \Th, \xi \in \dbL^2_\mu(\cF_t);
\eea

(ii) for any $(t,\mu) \in \Th$ and $\xi \in \dbL^2_\mu(\cF_t)$, the McKean-Vlasov SDE \reff{MV}  with $I(t, \mu) :=  \dbE\big[\pa_\mu V(t,\mu,\xi)\big]$ has a solution $X^*$; 

(iii)  for any $(t,\mu) \in \Th$, the optimization problem \reff{Vt} has an optimal control: $\a^*_s := I(s, \cL_{X^*_s})$. 
\end{theorem}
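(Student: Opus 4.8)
The plan is to reduce the genuinely non-smooth equation \reff{master-quadratic} to the smooth setting already covered by Theorem \ref{thm-classical}, using the a priori bound \reff{VLipschitz} to confine the relevant gradient to the region where $F$ is quadratic. The only obstruction to applying Theorem \ref{thm-classical} directly is that the $F$ in \reff{eg-H} is merely $C^1$, with a kink at $|x|=R$. First I would replace $F$ by a smooth, convex modification $\hat F$ with bounded derivatives that coincides with $F$ on a neighbourhood of $[-C_0,C_0]$; since $C_0<R$ and $F(x)={1\over2}x^2$ on $[-R,R]$, such an $\hat F$ exists and equals the quadratic branch there. Theorem \ref{thm-classical} (case (i) when $T$ is small, or case (ii) using that $g$ is concave and $\hat F$ can be taken strictly convex) then produces a unique classical solution $\hat V\in C^{1,2}(\Th)$ of \reff{master2} with $F$ replaced by $\hat F$.

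The crux is the gradient bound. Following Step 2 of the proof of Theorem \ref{thm-classical}, the function $\widetilde V(t,\mu):=\dbE[\pa_\mu\hat V(t,\mu,\xi)]$ is characterised by the identity $\widetilde V(t,\mu)=\Phi(\widetilde V(t,\mu);t,\mu)$, where $\Phi(y;t,\mu)=\dbE[\pa_x g(\xi+W_T-W_t+(T-t)\pa_x\hat F(y))]$ with $\xi\in\dbL^2_\mu(\cF_t)$ and $W_T-W_t$ independent of $\xi$; the same representation holds in the small-$T$ regime, since it is just the characteristic/Feynman--Kac formula for \reff{master2-1}, which is linear in $\pa_\mu\widetilde V$. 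Because $|\pa_x g|\le C_0$, we have $|\Phi(y;t,\mu)|\le C_0$ for every $y$, whence $|\widetilde V(t,\mu)|\le C_0<R$ for all $(t,\mu)$; this is exactly \reff{VLipschitz}. On $\{|y|\le C_0\}$ the modification $\hat F$ coincides with $F$, so $\hat F(\widetilde V)=F(\widetilde V)=F(\dbE[\pa_\mu\hat V])$, and therefore $V:=\hat V$ in fact solves the original equation \reff{master-quadratic}. Uniqueness subject to \reff{VLipschitz} follows because any solution satisfying that bound also solves the $\hat F$-equation, which has a unique classical solution. This settles (i).

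For (ii), I would set $I(t,\mu):=\dbE[\pa_\mu V(t,\mu,\xi)]=\widetilde V(t,\mu)$, which by (i) takes values in $[-C_0,C_0]\subset A$. Since $V\in C^{1,2}(\Th)$, $\pa_\mu V$ is bounded and Lipschitz in $(\mu,x)$, so $I$ is continuous on $\Th$ (given $\mu_n\to\mu$ in $\cW_2$, couple by optimal $\dbL^2$ approximations and pass to the limit by dominated convergence). The coefficients $b(t,x,a)=a$ and $\si=1$ are bounded and continuous in $a$, so Theorem \ref{thm-control-sufficient} applies and yields a strong solution $X^*$ of the McKean--Vlasov SDE \reff{MV}.

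Finally, (iii) follows from Theorem \ref{thm-control} once its optimality condition is checked. From \reff{eg-H}, $a\mapsto h(t,\mu,\pa_\mu V,\pa_x\pa_\mu V,a)={1\over2}\dbE[\pa_x\pa_\mu V]+a\,\dbE[\pa_\mu V]-{1\over2}a^2$ is a concave quadratic whose maximiser over $A=[-R,R]$ is the unconstrained point $a^*=\dbE[\pa_\mu V]=I(t,\mu)$, which lies strictly inside $A$ by \reff{VLipschitz}. Hence $I$ is the (continuous) optimal argument of $H$, and Theorem \ref{thm-control} gives that $\a^*_s=I(s,\cL_{X^*_s})$ is optimal, proving (iii). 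The main obstacle throughout is the bound of the second paragraph: the kink of $F$ at $|x|=R$ makes the equation genuinely non-smooth, and the whole argument rests on showing that $\dbE[\pa_\mu V]$ never leaves the quadratic region $[-C_0,C_0]$---which is precisely where the hypothesis $C_0<R$ is used.
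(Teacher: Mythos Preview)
Your outline is correct and parts (ii)--(iii) coincide with the paper's. The substantive difference is in how you obtain the key bound \reff{VLipschitz}. The paper realises the smoothed $\widetilde F$-equation as a control problem: writing $\widetilde F(x)=\sup_{a\in A}[ax-\widetilde f(a)]$ by convex duality, Theorem \ref{thm-value} identifies $V(t,\mu)=\sup_\a\widetilde J(t,\xi,\a)$, and then the estimate $|\widetilde J(t,\xi,\a)-\widetilde J(t,\xi',\a)|\le C_0\,\dbE|\xi-\xi'|$ (from $|\pa_x g|\le C_0$) yields $|V(t,\mu)-V(t,\mu')|\le C_0\,\cW_2(\mu,\mu')$, hence \reff{VLipschitz}. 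You instead reach inside the proof of Theorem \ref{thm-classical} and bound $\widetilde V=\dbE[\pa_\mu\hat V]$ directly via the fixed-point relation $\widetilde V=\Phi(\widetilde V;t,\mu)$ together with $|\Phi|\le\|\pa_x g\|_\infty\le C_0$. Your route is more direct but depends on the specific Step~2 construction; the paper's route treats Theorem \ref{thm-classical} as a black box and in addition delivers the $\cW_2$-Lipschitz continuity of $V$.

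Two small caveats. Your assertion that $\widetilde V=\Phi(\widetilde V;t,\mu)$ is ``the characteristic/Feynman--Kac formula'' for \reff{master2-1} is imprecise: the genuine Feynman--Kac representation carries the time-varying drift $\pa_x\hat F(\widetilde V(s,\cL_{X_s}))$ along the characteristics, not the frozen value $\pa_x\hat F(\widetilde V(t,\mu))$. The fixed-point identity does hold in both regimes (for small $T$ the map $y\mapsto\Phi(y;\cd)$ contracts and the Step~2 chain-rule computation goes through with $c=1-\pa_y\Phi>0$), but not for the reason you give; either representation yields $|\widetilde V|\le C_0$, so the conclusion is unaffected. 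Second, your uniqueness claim for the $\hat F$-equation does not follow from Theorem \ref{thm-classical} (which is only existence); it requires exactly the convex-duality step the paper performs, so that Theorem \ref{thm-value} applies to identify the solution as a value function.
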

\begin{proof}
Let $\widetilde F: \dbR \to \dbR$ be a smooth function such that
\beaa
\mbox{$\widetilde F$ is convex and $\widetilde F(x) =  F(x)$ for $|x|\le C_0$ or   $|x|\ge R$.}
\eeaa
Applying Theorem \ref{thm-classical}, the master equation \reff{master-quadratic} corresponding to $\widetilde F$ has a classical solution $V$.  Introduce the conjugate of $\widetilde F$:
$\widetilde f(a) := \sup_{x\in \dbR} [a x -  \widetilde F(x)]$,  $a\in A$. By the convexity of $\widetilde F$, we have $\widetilde F(x) = \sup_{a\in A} [a x - \widetilde f(a)]$.
Then by Theorem \ref{thm-value} we see that
\bea
\label{VtildeJ}
&\dis V(t,\mu) = \sup_{\a\in  \cA_t} \widetilde J(t, \xi, \a),\q\mbox{where}&\\
&\dis \widetilde X^{t,\xi,\a}_s := \xi + \int_t^s \a_r dr + W_s-W_t,\q  \widetilde J(t, \xi, \a) := \dbE\big[g(\widetilde X^{t,\xi,\a}_T)\big] - \int_t^T \widetilde f(\a_s) ds.&\nonumber
\eea
For any $t\in [0, T]$, $\xi, \xi'\in \dbL^2(\cF_t)$, and $\a\in  \cA_t$, under our conditions it is clear that, 
\beaa
|\widetilde J(t, \xi, \a) -\widetilde J(t,\xi',\a)| \le C_0\dbE[|\xi-\xi'|]. 
\eeaa
Since $\xi, \xi'$ are arbitrary, then it follows from  \reff{VtildeJ} that
\beaa
|V(t, \mu) -V(t,\mu')| \le C_0 \cW_2(\mu, \mu'),
\eeaa
which implies \reff{VLipschitz}  immediately. Since $\widetilde F(x) = F(x) = {1\over 2} x^2$ for $|x|\le C_0$, then \reff{VLipschitz} implies further that $V$ is a classical solution to master equation \reff{master-quadratic} corresponding to $F$.

(ii) Clearly in this case  the optimal argument of the Hamiltonian $F$ leads to $I(t,\mu) = \dbE\big[\pa_\mu V(t,\mu,\xi)\big]$, which is continuous. Then (ii) follows from Theorem \ref{thm-control-sufficient}.

Finally, $(iii)$ follows directly from Theorem \ref{thm-control}.
\end{proof}  

We remark that in this example it is more natural to set $A = \dbR$ and all the results still hold true. The constraint $A= [-R, R]$ is to ensure the uniform requirement in Assumption \ref{assum-deterministic1} $(i)$, which is more convenient for establishing the general theory, but can be relaxed.

\section{The General Case}
\label{sect-general}
\setcounter{equation}{0}
In this section we investigate the general case $T>\ch$.

\subsection{Strong formulation with closed loop controls}
In this subsection we illustrate how the information delay naturally lead to the path dependence of the value function, even if  the coefficients $b, \si, f, g$ in \reff{Ja} depend only on the current state of $X$. It is easier to show the idea in strong formulation, namely we fix a probability space and the state process $X^\a$ is controlled,  but we emphasize that we shall use  closed loop controls, both for practical and for theoretical reasons.

As in Section \ref{sect-deterministic}, let  $(\Om, \dbF, \dbP)$ be a filtered probability space on $[0, T]$, and $W$ an $\dbF$-Brownian motion under $\dbP$.  For simplicity, in this subsection we assume $T \le 2\ch$, which will not be required in later subsections. Let $t\in (\ch, T]$, and $\xi$ be an $\dbF$-progressively measurable process on $[0, t]$. Consider the following counterpart of  \reff{Ja}: 
\bea
 \label{Ja2}
 \left.\ba{c}
\dis X_s^{t, \xi, \a} = \xi_t + \int_t^s b\big(r, X_r^{t, \xi, \a}, \a_r(\xi_{[0, r-\ch]})\big)dr + \int_t^s \sigma\big(r, X_r^{t, \xi, \a},  \a_r(\xi_{[0, r-\ch]})\big) dW_r;\\
\dis J(t,\xi, \a) := \dbE\left[g(X_T^{t, \xi, \a}) + \int_t^T f\big(s, X_s^{t, \xi, \a},  \a_s(\xi_{[0, s-\ch]})\big)ds \right].
\ea\right.
\eea
Similar to Lemma \ref{lem-invariant}, $J(t,\xi, \a)$ depends on  $\xi$ only through the law of the stopped process $\xi_{[0,t]}$. That is, if $\xi'$ is another process  such that $\cL_{\xi_{[0,t]}} = \cL_{\xi'_{[0,t]}}$, then $J(t, \xi, \a) = J(t,\xi', \a)$. Consequently, the following value function is also law invariant:
 \bea
\label{V2}
 \widetilde V(t,\xi) := \sup_{\a\in \cA_t} J(t,\xi, \a).
 \eea
 
We emphasize that the above law invariant property relies on the law of the stopped process $\xi_{[0,t]}$, rather than the law of the current state $\xi_t$.

 \begin{example}
 \label{eg-path}
 Let $d=1$, $A = [-1,1]$, $b(t,x,a) = a$, $\si(t,x,a) = 1$, $f(t,x,a)=0$, $g(x) = x^2$, $T=2\ch$, $t= {3\over 2} \ch$. Set
 \bea
 \label{xixi'}
 \xi_s = W_s, 0\le s\le t,\q \xi'_s := W_{3(s-\ch)}\1_{[\ch, t]}(s).
 \eea
 Then $\xi_t = \xi'_t=W_t$ but  in general $\widetilde V(t,\xi)\neq \widetilde V(t, \xi')$.
  \end{example}
\begin{proof}
First, since $\xi'_s = 0$, $s\le \ch$, then $ \a_r(\xi'_{[0, r-\ch]})=\a_r(0)$ is deterministic. Thus
\beaa
J(t,\xi', \a) = \dbE\Big[\big|W_t + \int_t^T \a_r(0) dr + W_T-W_t\big|^2\Big]=\big| \int_t^T \a_r(0) dr \big|^2 + T.
\eeaa
This implies 
\beaa
\widetilde V(t,\xi') = T + (T-t)^2 =2\ch + {1\over 4} \ch^2.
\eeaa
On the other hand, denote $\b_r := \a_{r+\ch}(W_{[0,r]})$ which is $\cF_r$-measurable, then
\beaa
J(t,\xi, \a) &=& \dbE\Big[\big|W_t + \int_{t-\ch}^{T-\ch} \b_r dr + W_T-W_t\big|^2\Big]\\
 &=& \dbE\Big[\big|\int_{t-\ch}^{T-\ch} \b_r dr|^2  + 2\int_{t-\ch}^{T-\ch} W_r\b_r dr\Big] + T \ge 2 \dbE\Big[\int_{\ch\over 2}^{\ch} W_r\b_r dr\Big] + 2\ch.
\eeaa
By choosing $\b_r = \mbox{sign}(W_r)$, we have
\beaa
\widetilde V(t,\xi) \ge  2 \dbE\Big[\int_{\ch\over 2}^{\ch} |W_r|dr\Big] + 2\ch = 2\ch + c  \ch^{3\over 2},
\eeaa 
where $c>0$ is a generic constant independent of $\ch$. Then clearly $\widetilde V(t,\xi) > \widetilde V(t,\xi')$, when $\ch$ is small enough.
\end{proof}

We also remark that it is crucial to use closed loop controls. If we use open loop controls with delay, namely $\a_s = \a_s(W_{[0, s-\ch]})$, then for each $\a$, obviously $J(t, \xi, \a)$ would depend on the joint law of $(\xi, W)$ on $[0, t]$. The following example shows that the corresponding value function $\tilde V(t,\xi)$ may also violate the law invariant property.
 \begin{example}
 \label{eg-open}
 Consider the same setting in Example \ref{eg-path}, but replace \reff{xixi'} with
 \beaa
 \xi_s = [W_s-W_\ch]\1_{[\ch, t]}(s), \q  \xi'_s = W_{s-\ch}\1_{[\ch, t]}(s),\q 0\le s\le t.
 \eeaa
 Then $\cL_{\xi_{[0, t]}} =\cL_{\xi'_{[0, t]}}$. However, if we use open loop controls but still denote the value function as $\tilde V$, then  $\widetilde V(t,\xi)\neq \widetilde V(t, \xi')$.
  \end{example}
\begin{proof}
First, note that $\a_s = \a_s(W_{[0,s-\ch]})$ is $\cF_\ch$-measurable. Then
\beaa
J(t,\xi, \a) = \dbE\Big[\big|W_t  - W_\ch+ \int_t^T \a_s ds + W_T-W_t\big|^2\Big]=\dbE\Big[\big|\int_t^T \a_s ds\big|^2\Big] + T-\ch.
\eeaa
This implies 
\beaa
\widetilde V(t,\xi) = T -\ch + (T-t)^2 = \ch + {1\over 4} \ch^2.
\eeaa
On the other hand, denote $\b_r := \a_{r+\ch}(W_{[0,r]})$ which is $\cF_r$-measurable, then
\beaa
J(t,\xi', \a) = \dbE\Big[\big|W_{\ch\over 2} + \int_{t-\ch}^{T-\ch} \b_r dr + W_T-W_t\big|^2\Big]= \dbE\Big[\big|W_{\ch\over 2} + \int_{\ch\over 2}^{\ch} \b_r dr\big|^2\Big]+{\ch\over 2}.
\eeaa
By choosing $\b_r = \mbox{sign}(W_{\ch\over 2})$,  we have
\beaa
\widetilde V(t,\xi) \ge \dbE\Big[\big[|W_{\ch\over 2}| + {\ch\over 2}\big]^2\Big] +{\ch\over 2} = \ch + {1\over 4} \ch^2 + \ch \dbE[|W_{\ch\over 2}|] > \widetilde V(t,\xi).
\eeaa 
This completes the proof. 
\end{proof}

\subsection{Weak formulation in  path-dependent setting}

Both for closed loop controls and for path-dependent problems, it is a lot more convenient to use the weak formulation on canonical space. We shall follow the setting of \cite{WZ}.

 Let $\Om := C([0, T]; \dbR^d)$  be the canonical space equipped with the metric $\|\o\|_T := \sup_{0\le t\le T}|\o_t|$, $X$ the canonical process, and $\dbF = \dbF^X = \{\cF_t\}_{t \in [0,T]}$ the natural filtration generated by $X$. Denote by $\cP_2(\cF_T)$ the set of probability measures $\dbP$ on $\cF_T$ such that $\dbE^\dbP[\|X\|^2_T] <\infty$ and $\Th_T := [0, T]\times \cP_2(\cF_T)$. Quite often we will also use $\mu$ to denote elements of $\cP_2(\cF_T)$.   
 We equip $\cP_2(\cF_T)$ with the $2$-Wasserstein distance  $\cW_2$ which extends \reff{W2}: 
 \bea
 \label{W2path}
 \cW_2(\mu,\mu') := \inf\big\{ \big(\dbE[\sup_{0\le t\le T} |\eta_t - \eta'_t|^2\big]\big)^{1\over 2}:  \cL_\eta = \mu, \cL_{\eta'} = \mu' \big\},
\eea 
for $\mu, \mu' \in \cP_2(\cF_T)$, where $\cL_\eta$ is the law of the process $\eta$.  

Given $\mu\in \cP_2(\cF_T)$, let $  \mu_{[0,t]}$ denote the $\mu$-distribution of the stopped process $X_{[0, t]}$. For a function $V: \Th_T\to \dbR$, we say $V$ is $\dbF$-adapted if $V(t, \mu) = V(t, \mu_{[0,t]})$ for any $(t, \mu)\in \Th_T$. For such $V$, we define its time derivative as:
\bea
\label{patV2}
\pa_t V(t, \mu) := \lim_{\d\downarrow 0} {V(t+\d,  \mu_{[0,t]}) - V(t,  \mu_{[0,t]}) \over \d},
\eea
where we are freezing the law of $X$ from $t$ to $t+\delta$. The spacial derivative takes the form $\pa_\mu V: \Th_T \times \Om\to \dbR^d$ and is $\dbF$-progressively measurable, namely measurable in all variables and $\dbF$-adapted. We emphasize that, as in \cite{fito_dupire}, $\pa_\mu V$ is not a Fr\'echet derivative with respect to the law of the whole stopped process $X_{[0,t]}$, but a derivative with respect to $\cL_{X_t}$ only. Roughly speaking, by extending the whole setting to the space of {\cad} paths, let $\xi$ be a process on $[0, t]$ such that $\cL_\xi =  \mu_{[0,t]}$, and $\xi'_t$ be an $\cF_t$-measurable random variable. Then 
\bea
\label{pamuV2}
\dbE^\mu\Big[\pa_\mu V(t, \mu, \xi) \cd \xi'_t\Big] := \lim_{\e\downarrow 0} {V(t,  \cL_{\xi + \e \xi'_t\1_{\{t\}}})  - V(t, \cL_\xi) \over \e}.
\eea
Moreover, for the process $\pa_\mu V(t, \mu, X_\cd)$, we may introduce the path derivative $\pa_\o \pa_\mu V$ in the spirit of  \cite{fito_dupire}. When $V$ is smooth enough in these senses, the functional It\^{o} formula \reff{FIto} below holds true. We refer to \cite{WZ} for details.  In this section, to avoid the technical details,  we take the approach in \cite{fito_zhang_viscosity_I} and use the functional It\^o formula directly to define the smoothness of $V$.

\begin{definition}
\label{defn-C12path}
Let $C^{1,2}(\Th_T )$ denote the space of   functions $V: \Th_T \to \dbR$  such that there exist  functions  $\pa_\mu V:  \Th_T \times \Om\to \dbR^d$ and   $\pa_\o\pa_\mu V:  \Th_T \times \Om\to \dbR^{d\times d}$ satisfying:

(i)  the $\pa_t V$ defined by \reff{patV2} exists, and $V$, $\pa_t V$, $\pa_\mu V$, $\pa_\o\pa_\mu V$ are all $\dbF$-adapted and  uniformly continuous;

(ii) for any semimartingale measure $\dbP$, namely $X$ is a semimartingale under $\dbP$, the following functional It\^o formula holds: 
\bea
\label{FIto}
\q d V(t, \dbP) = \pa_t V(t, \dbP) dt + \dbE^\dbP\Big[\pa_\mu V(t, \dbP, X_\cd) \cd dX_t + {1\over 2} \pa_\o \pa_\mu  V(t, \dbP, X_\cd) : d\la X\ra_t\Big]. 
\eea
\normalsize
\end{definition}
By Lemma \ref{lem-Ito} and \cite{WZ}, the spatial derivatives there coincide with the above  $\pa_\mu V, \pa_\o \pa_\mu V$  (with $\pa_\o\pa_\mu V = \pa_x \pa_\mu V$ in Markovian case).  
We remark that, for the purpose of viscosity solutions, in \cite{WZ}, \reff{FIto} is required only for semimartingale measures whose drift and diffusion characteristics are bounded. In that case, the regularity requirements on $V$ are weaker than the corresponding conditions in Lemma \ref{lem-Ito}. It is not difficult to extend the functional It\^o formula in \cite{WZ} to allow for more general semimartingale measures. Nevertheless, it is more convenient to define the derivatives through the functional It\^o formula directly as we do here.

\begin{lemma}
\label{lem-pamuunique}
For any $V\in C^{1,2}(\Th_T )$, the derivatives  $\pa_\mu V$ and $\pa_\o\pa_\mu V$ are unique in the sense that  $\pa_\mu V(t, \mu, X_\cd)$ and  ${1\over 2}[\pa_\o\pa_\mu V+  (\pa_\o\pa_\mu V)^\top](t, \mu, X_\cd)$ are $\mu$-a.s. unique for any $(t,\mu)\in \Th_T$.
\end{lemma}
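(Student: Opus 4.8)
The plan is to compare two candidate derivative families attached to the \emph{same} $V$ and to force them to agree. Suppose both $(\pa_\mu V, \pa_\o\pa_\mu V)$ and $(\pa_\mu V', \pa_\o\pa_\mu V')$ satisfy Definition \ref{defn-C12path}(ii), and set $\D_b := \pa_\mu V - \pa_\mu V'$ and $\D_a := \pa_\o\pa_\mu V - \pa_\o\pa_\mu V'$. The time derivative $\pa_t V$ is fixed intrinsically by \reff{patV2}, so it is common to both instances of the functional It\^o formula \reff{FIto}; subtracting them and writing the characteristics of a semimartingale measure $\dbP$ as $dX_s = \b_s\, ds + \si_s\, dW_s$ (so the martingale part drops under $\dbE^\dbP$ and $d\la X\ra_s = \si_s\si_s^\top\, ds$) gives, for every such $\dbP$ and every $t\ge t_0$,
\[
\int_{t_0}^{t} \dbE^\dbP\Big[\D_b(s, \dbP, X_\cd)\cd \b_s + {1\over 2}\, \D_a(s, \dbP, X_\cd) : \si_s\si_s^\top\Big]\, ds = 0.
\]
Because the derivatives are uniformly continuous, the integrand $G(s)$ is right-continuous at $t_0$, whence $G(t_0) = \lim_{\d\downarrow 0}{1\over\d}\int_{t_0}^{t_0+\d} G = 0$. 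Now fix a semimartingale measure $\nu$ and a time $t_0$; for bounded $\cF_{t_0}$-measurable $b_0 = b_0(X_{[0,t_0]})\in\dbR^d$ and $\si_0 = \si_0(X_{[0,t_0]})\in\dbR^{d\times d}$ I concatenate $\nu_{[0,t_0]}$ with the continuation $dX_s = b_0\, ds + \si_0\, dW_s$ on $[t_0,T]$ to build a semimartingale measure $\dbP$ with $\dbP_{[0,t_0]} = \nu_{[0,t_0]}$. Since $\D_b,\D_a$ are $\dbF$-adapted, $G(t_0)=0$ for this $\dbP$ reads
\[
\dbE^\nu\Big[\D_b(t_0,\nu,X_\cd)\cd b_0 + {1\over 2}\,\D_a(t_0,\nu,X_\cd):\si_0\si_0^\top\Big]=0 .
\]

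From this bilinear identity the two uniqueness claims drop out by suitable choices. Taking $\si_0=0$ and $b_0 = \D_b(t_0,\nu,X_\cd)\1_{\{|\D_b|\le N\}}$ gives $\dbE^\nu[|\D_b|^2\1_{\{|\D_b|\le N\}}]=0$ for every $N$, hence $\D_b(t_0,\nu,X_\cd)=0$ $\nu$-a.s. Taking $b_0=0$, only the symmetric part $\D_a^{\rm sym}:={1\over 2}(\D_a+\D_a^\top)$ pairs with $\si_0\si_0^\top$, and as $\si_0$ varies the matrices $\si_0\si_0^\top$ exhaust all symmetric positive semidefinite matrices; decomposing an arbitrary bounded $\cF_{t_0}$-measurable symmetric $M=M_+-M_-$ spectrally and choosing $\si_0=M_\pm^{1/2}$ (measurable by continuous functional calculus) yields $\dbE^\nu[\D_a^{\rm sym}:M]=0$ for all such $M$, so that $M=\D_a^{\rm sym}\1_{\{|\D_a^{\rm sym}|\le N\}}$ forces $\D_a^{\rm sym}(t_0,\nu,X_\cd)=0$ $\nu$-a.s. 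This is exactly the uniqueness of $\pa_\mu V$ and of ${1\over 2}[\pa_\o\pa_\mu V+(\pa_\o\pa_\mu V)^\top]$; the antisymmetric part of $\pa_\o\pa_\mu V$ is genuinely undetermined precisely because $d\la X\ra$ is symmetric.

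It remains to pass from semimartingale measures to a general $\mu\in\cP_2(\cF_T)$. Semimartingale measures are $\cW_2$-dense (e.g.\ by time-mollification of a representative process, which produces finite-variation, hence semimartingale, paths), so I pick semimartingale $\nu_n\to\mu$ and couple $\eta_n\sim\nu_n$, $\eta\sim\mu$ with $\dbE[\|\eta_n-\eta\|_T^2]\to0$, extracting an a.s.\ (sup-norm) convergent subsequence. Since $\D_b(t_0,\nu_n,\eta_n)=0$ a.s.\ and $\D_b$ is uniformly continuous, passing to the limit gives $\D_b(t_0,\mu,X_\cd)=0$ $\mu$-a.s., and likewise for $\D_a^{\rm sym}$. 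The main obstacle is the measure-theoretic construction underlying the two displays: producing the concatenated semimartingale measures that freeze an arbitrary $\cF_{t_0}$-measurable drift/diffusion at $t_0$ while matching the prescribed past, together with the right-continuity step that extracts the pointwise-in-time identity at $t_0$; the measurable square root, the truncations, and the density/continuity extension are then routine.
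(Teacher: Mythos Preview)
Your proof is correct and follows essentially the same strategy as the paper: concatenate a given semimartingale past with a continuation having frozen $\cF_{t_0}$-measurable drift and diffusion, use uniform continuity to pass from the integrated It\^o identity to a pointwise-in-$t_0$ bilinear relation, and then extend from semimartingale measures to arbitrary $\mu$ by the same time-mollification density argument. The only differences are cosmetic---you make the test choices $b_0=\D_b\1_{\{|\D_b|\le N\}}$ and $\si_0=M_\pm^{1/2}$ explicit where the paper simply invokes arbitrariness of $b_t,\si_t$, and for the extension step you use a $\cW_2$-coupling with a.s.\ convergence along a subsequence while the paper tests against bounded continuous $\eta_t$ via weak convergence; both routes are valid and equivalent in strength.
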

We remark that, since $\la X\ra$ is symmetric, so the uniqueness of ${1\over 2}[\pa_\o\pa_\mu V+  (\pa_\o\pa_\mu V)^\top]$ $(t, \mu, X_\cd)$ implies that uniqueness of $\pa_\o \pa_\mu  V(t, \dbP, X_\cd) : d\la X\ra_t$ in \reff{FIto}.

\begin{proof} First let $\mu\in \cP_2(\cF_T)$ be a semimartingale measure. For any $t \in [0, T]$ and any $\cF_t$-measurable and  bounded  random variables $b_t$ and $\si_t>0$, let $\dbP\in \cP_2(\cF_T)$ be such that 
\beaa
\dbP_{[0,t]} = \mu_{[0,t]}\q\mbox{and}\q X_s - X_t = b_t[s-t] + \si_t [W_s - W_t],~ t\le s\le T,~ \dbP\mbox{-a.s.},
\eeaa
for some $\dbP$-Brownian motion $W$.  Then, by \reff{FIto}, we see that 
\beaa
\dbE^\dbP\Big[ b_t \cd \int_t^s \pa_\mu V(r,  \dbP, X_\cd)dr + {1\over 2} \si_t\si^\top_t : \int_t^s \pa_\o \pa_\mu V(r,  \dbP, X_\cd)dr\Big]
\eeaa
 is unique. By the uniform continuity of $\pa_\mu V$ and $\pa_\o \pa_\mu V$, this implies that  
 \beaa
 \dbE^{\mu} \Big[ b_t\cd \pa_\mu V(t,  \mu, X_\cd) + {1\over 2}\si_t\si^\top_t : \pa_\o\pa_\mu V(t,  \mu, X_\cd) \Big] 
 \eeaa
  is unique. Here we rewrite $\dbP$ as $\mu$ since $\dbP_{[0,t]} = \mu_{[0,t]}$ and the integrand at above is $\cF_t$-measurable. Since $b_t$ and $\si_t$ are arbitrary, we obtain the desired uniqueness.

Now assume $\mu\in \cP_2(\cF_T)$ is arbitrary. For any $\e>0$, denote $X^\e_t := {1\over \e} \int_{(t-\e)^+}^t X_s ds$ and $\mu^\e := \mu \circ (X^\e)^{-1}$. Then 
\beaa
\cW^2_2(\mu, \mu^\e) \le \dbE^\mu \Big[\|X-X^\e\|_T^2\Big] \to 0,\q\mbox{as}~ \e\to 0,
\eeaa
which implies that $\mu^\e\to \mu$ weakly.
Clearly $X^\e$ is an $\mu$-semimartinagle, then $\mu^\e$ is a semimartingale measure. Thus $\pa_\mu V(t, \mu^\e_t, X_\cd)$ is $\mu^\e$-a.s. unique.  Let $\eta_t$ be $\cF_t$-measurable, bounded, and continuous in $\o$ (under $\|\cd\|_T$). Note that, denoting by $\rho$ the modulus of continuity function of $\pa_\mu V$,
\beaa
&&\Big|\dbE^{\mu_\e} \Big[\pa_\mu V(t, \mu^\e, X_\cd) \eta_t\Big] - \dbE^{\mu} \Big[\pa_\mu V(t, \mu, X_\cd) \eta_t\Big] \Big|\\
&\le& \Big|\dbE^{\mu_\e} \Big[\pa_\mu V(t, \mu^\e, X_\cd) \eta_t\Big] - \dbE^{\mu_\e} \Big[\pa_\mu V(t, \mu, X_\cd) \eta_t\Big] \Big|\\
&&+\Big|\dbE^{\mu_\e} \Big[\pa_\mu V(t, \mu, X_\cd) \eta_t\Big] - \dbE^{\mu} \Big[\pa_\mu V(t, \mu, X_\cd) \eta_t\Big] \Big|\\
&\le& C\rho(\cW_2(\mu, \mu^\e)) + \Big|\dbE^{\mu_\e} \Big[\pa_\mu V(t, \mu , X_\cd) \eta_t\Big] - \dbE^{\mu} \Big[\pa_\mu V(t, \mu , X_\cd) \eta_t\Big] \Big|.
\eeaa
Sending $\e\to 0$, by the weak convergence of $\mu^\e\to \mu$, we see that 
\beaa
 \dbE^{\mu} \Big[\pa_\mu V(t, \mu , X_\cd) \eta_t\Big]  = \lim_{\e\to 0} \dbE^{\mu_\e} \Big[\pa_\mu V(t, \mu^\e , X_\cd) \eta_t\Big]
 \eeaa
is unique. Since $\eta_t$ is arbitrary, we obtain the desired uniqueness of $\pa_\mu V(t, \mu , X_\cd)$. Similarly we have the uniqueness of $\pa_\o\pa_\mu V$.
\end{proof}

\subsection{The control problem in  weak formulation}
Since the value function depends on the path of the state process $X$ anyway, we shall work on path-dependent setting directly, i.e. we will allow $b$, $\sigma$, $f$, and $g$ to depend on the paths of $X$, namely $b, \si, f$ are functions on $[0,T]\times \Omega \times A$ and $g$ is a function on $\Omega$, so as to have a more general result.
Let $\cA^\ch_t$ denote the set of $\dbF$-progressively measurable $A$-valued processes $\a$ on $[t, T]$ such that $\a_s$ is $\cF_{(s-\ch)^+}$-measurable, namely $\a_s = \a_s(X_{[0, (s-\ch^+]})$. Given $(t,\mu)\in  \Th_T$ and $\a\in \cA^\ch_t$, denote by $\dbP^{t,\mu, \a}$ the unique probability measure  $\dbP\in \cP_2(\cF_T)$ such that $\dbP_{[0,t]} =\mu_{[0,t]} $ and $\dbP$ is the  strong solution of the following SDE on $[t, T]$:
\bea
\label{X2}
d X_s = b(s, X_\cd, \a_s) ds  + \si(s, X_\cd, \a_s)dW_s,~ t\le s\le T, \dbP\mbox{-a.s.}
\eea
We emphasize again that at above $\a_s = \a_s(X_{[0,(s-\ch^+]})$. We then define
\bea
\label{Vtmu2}
V(t,\mu) := \sup_{\a\in \cA^\ch_t} J(t, \mu,\a) :=  \sup_{\a\in \cA^\ch_t}  \dbE^{\dbP^{t,\mu, \a}}\Big[g(X_\cd) + \int_t^T f(s, X_\cd, \a_s) ds\Big].
\eea

\begin{remark}
\label{rem-cF0}
{\rm When $T\le \ch$, $\a_t$ is $\cF_0$-measurable for $t\in [0, T]$. Since $\cF_0$ is not degenerate here, so in general $\a$ may not be deterministic, and thus rigorously  speaking the formulation here is slightly different from that in Sections \ref{sect-deterministic} and \ref{sect-example}. However, they are equivalent when $\mu_0$ is degenerate, namely $X_0$ is a constant, $\mu$-a.s. 

Alternatively, following the rationale of information delay, one may require $\a_t$ to be $\cF_{0-} := \{\emptyset, \Omega\}$-measurable for $t<\ch$, and thus is deterministic. One minor disadvantage of this reformulation is that the information flow will have a jump at $t=\ch$. Again, this discontinuity disappears when $\mu_0$ is degenerate.
\qed}
\end{remark}

Similar to Assumption \ref{assum-deterministic1}, we shall assume:

\begin{assumption}
\label{assum-deterministic2} (i) $b, \si, f$  are $\dbF$-adapted, and 
$b(t, 0, a)$, $\si(t,0, a)$,  and $f(t,0,a)$ are bounded;

(ii) $b$ and $\si$ are uniformly Lipschitz continuous in $\o$,  uniformly continuous in $t$, and continuous in $a$;

(iii)  $f$ is uniformly continuous in $(t,\o)$ and continuous in $a$, and $g$ is uniformly continuous in $\o$. 
\end{assumption}

 Under the above assumptions, it is clear that \reff{X2} is wellposed, $V$ is $\dbF$-adapted, and analogous to Theorem \ref{thm-DPP1} one can easily prove
\bea
\label{pathDPP}
V(t,\mu) = \sup_{\a\in \cA^\ch_t}  \Big[V(t+\d, \dbP^{t,\mu,\a} )+ \int_t^{t+\d} \dbE^{\dbP^{t,\mu, \a}}\big[f(s, X_\cd, \a_s) \big]ds\Big].
\eea
Now assume $V\in C^{1,2}(\Th_T )$  in the sense of Definition \ref{defn-C12path}. By \reff{pathDPP}, similar to Theorem \ref{thm-value} one can easily derive 
\bea
\label{master-path}
 \pa_t V(t, \mu)  +  H(t,\mu, \pa_\mu V, \pa_\o\pa_\mu V) = 0,
 \eea
where, for $p: \Th_T\times \Om\to \dbR^d$ and $q:  \Th_T\times \Om\to \dbR^{d\times d}$,
\bea
\label{Hamiltonian2}
\left.\ba{c}
\dis H(t,\mu, p,q) := \sup_{\a\in \cA^\ch_t}  h(t, \mu, p,q, \a_t),\\ 
\dis h(t,\mu, p, q, \a_t):= \dbE^\mu\Big[ \big[b(\cd)\cd p(t, \mu, X_\cd) + {1\over 2}\si\si^\top(\cd): q(t, \mu, X_\cd) + f(\cd)\big](t, X_\cd, \a_t)\Big].
\ea\right.
\eea
Note that $\a_t$ is $\cF_{(t-\ch)^+}$-measurable. Denote $\ol t := (t-\ch)^+$ and let $\mu^{\ol t, \o}$ denote the regular conditional probability distribution of $\mu$ given $\cF_{\ol t}$, i.e. $\mu^{\ol t,\o}(E) = \dbE^\mu[1_E(X_{[0,t]})$ $\ | \ \cF_{\ol t}](\o)$ for $\mu$-a.e. $\o\in\Omega$. 
Then, 
\bea
\label{Hamiltonian2-1}
\left.\ba{c}
\dis h(t,\mu, p,q, \a_t):= \dbE^\mu\Big[ \ol h\big(t, X_{[0, \ol t]},  \mu, p, q, \a_t(X_{[0,\ol t]}\big)\big) \Big],\q\mbox{where}\ms\\ 
\dis \ol h(t, \o, \mu, p, q, a) := \dbE^{\mu^{\ol t, \o}}\Big[ \big[b(\cd)\cd p(t, \mu, X_\cd)  + {1\over 2}\si\si^\top(\cd): q(t, \mu, X_\cd) + f(\cd)\big](t, X_\cd, a)\Big].
\ea\right.
\eea
We remark that in \reff{Hamiltonian2} $h$ depends on the whole random variable $\a_t$, while in \reff{Hamiltonian2-1} $\ol h$ depends on the realized value $a\in A$.
We have the following result:

\begin{theorem}
\label{thm-Hamiltonian2-2}
Let Assumption \ref{assum-deterministic2} hold. 

(i) For any $p: \Th_T \times \dbR^d\to \dbR^d$, $q: \Th_T \times \dbR^d\to \dbR^{d\times d}$ uniformly continuous,  the Hamiltonian $H$ in \reff{Hamiltonian2} becomes
\bea
\label{Hamiltonian2-2}
 H(t,\mu, p,q) = \dbE^\mu\Big[ \sup_{a\in A}  \ol h(t,  X_{[0,\ol t]}, \mu, p, q, a)\Big];
\eea

(ii) Assume $V\in C^{1,2}(\Th_T )$. Then $V$ is the value function in \reff{Vtmu2} if and only if $V$ satisfies the following path-dependent master equation:
\bea
\label{masterpath-2}
\qq \pa_t V(t, \mu)  +  \dbE^\mu\Big[ \sup_{a\in A}  \ol h(t,  X_{[0,\ol t]}, \mu, \pa_\mu V, \pa_\o\pa_\mu V, a)\Big] = 0,~V(T,\mu) = \dbE^\mu[g(X_\cd)].
 \eea
\end{theorem}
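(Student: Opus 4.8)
The plan is to handle the two parts in order: part (i) is essentially an interchange-of-supremum-and-expectation (measurable selection) statement, and part (ii) then combines it with the functional It\^o formula \reff{FIto} to obtain the verification theorem, following the template of Theorem \ref{thm-value}.

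For part (i), I would start from the representation \reff{Hamiltonian2-1}, which already expresses $h(t,\mu,p,q,\a_t) = \dbE^\mu[\ol h(t, X_{[0,\ol t]}, \mu, p, q, \a_t(X_{[0,\ol t]}))]$ via the tower property for the conditional law $\mu^{\ol t,\o}$. As $\a$ ranges over $\cA^\ch_t$, the time-$t$ value $\a_t$ ranges over all $\cF_{\ol t}$-measurable $A$-valued random variables, so writing $\ell(\o,a) := \ol h(t, \o, \mu, p, q, a)$ the claim reduces to
\[
\sup_{\a\in\cA^\ch_t}\dbE^\mu\big[\ell(X_{[0,\ol t]},\a_t)\big]=\dbE^\mu\big[\sup_{a\in A}\ell(X_{[0,\ol t]},a)\big].
\]
The inequality ``$\le$'' is immediate, since $\ell(X_{[0,\ol t]},\a_t)\le\sup_{a\in A}\ell(X_{[0,\ol t]},a)$ pointwise. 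For ``$\ge$'' I would invoke a measurable selection argument: under Assumption \ref{assum-deterministic2}, together with the uniform continuity of $p,q$, the map $a\mapsto\ell(\o,a)$ is continuous while $\o\mapsto\ell(\o,a)$ is measurable, so $\ell$ is a Carath\'eodory integrand. Since $A$ is a separable subset of a Euclidean space, $\o\mapsto\sup_{a\in A}\ell(\o,a)$ is measurable (it coincides with the supremum over a countable dense subset of $A$) and integrable by the boundedness and Lipschitz bounds in Assumption \ref{assum-deterministic2}. Hence for each $\e>0$ one can select an $\cF_{\ol t}$-measurable $a^\e(\cd)$ with $\ell(\o,a^\e(\o))\ge\sup_{a}\ell(\o,a)-\e$, which extends to an admissible $\a^\e\in\cA^\ch_t$ and yields ``$\ge$'' after letting $\e\downarrow 0$.

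For part (ii), I would mirror the proof of Theorem \ref{thm-value} in the path-dependent weak formulation. For the ``only if'' direction, assume $V\in C^{1,2}(\Th_T)$ is the value function \reff{Vtmu2}. Applying the functional It\^o formula \reff{FIto} to the dynamic programming principle \reff{pathDPP} over $[t,t+\d]$ and letting $\d\downarrow0$ produces \reff{master-path} with $H$ in the form \reff{Hamiltonian2}; part (i) then rewrites the Hamiltonian as in \reff{masterpath-2}, and the terminal condition $V(T,\mu)=\dbE^\mu[g(X_\cd)]$ is read directly from \reff{Vtmu2}. For the ``if'' direction, assume $V$ solves \reff{masterpath-2}. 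For any $\a\in\cA^\ch_t$, applying \reff{FIto} to $s\mapsto V(s,\dbP^{t,\mu,\a})$ on $[t,T]$ and using $h\le H$ gives $J(t,\mu,\a)\le V(t,\mu)$; conversely, the $\e$-optimal selection of part (i), inserted on a time grid exactly as in the construction of $\a^{n,\e}$ in the proof of Theorem \ref{thm-value}, produces controls with $J(t,\mu,\a^{n,\e})\ge V(t,\mu)-\e+o(1)$, whence $V$ is the value function.

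The main obstacle is the measurable selection in part (i). One must guarantee that the pointwise supremum over the possibly noncompact set $A$ is a genuinely measurable and integrable function of $\o$, and that it is attained up to $\e$ by an $\cF_{\ol t}$-measurable selection which is in turn extendable to an admissible control in $\cA^\ch_t$; this is where the Carath\'eodory structure and separability of $A$ enter. A secondary difficulty, in the ``if'' direction of part (ii), is that the near-optimal control must now be chosen simultaneously in the time variable and in $\o$, so the regularity estimates analogous to \reff{Verror}--\reff{herror} must be transported to the weak, path-dependent setting, relying on the uniform continuity built into Definition \ref{defn-C12path} and Assumption \ref{assum-deterministic2}.
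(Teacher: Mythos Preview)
Your proposal is correct and follows essentially the same route as the paper: part (i) via measurable selection on the Carath\'eodory integrand $\ol h$, and part (ii) by transplanting the verification argument of Theorem \ref{thm-value} to the path-dependent setting through the functional It\^o formula \reff{FIto} and the DPP \reff{pathDPP}. The only refinement the paper makes explicit is that standard measurable selection produces an $\cF^\mu_{\ol t}$-measurable (i.e.\ completed-filtration) selector, so an additional step (e.g.\ \cite[Proposition 1.2.2]{Zhang_book}) is needed to replace it by an $\cF_{\ol t}$-measurable version equal $\mu$-a.s.; you should flag this when you write ``one can select an $\cF_{\ol t}$-measurable $a^\e(\cdot)$''.
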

\begin{proof}  $(i)$ Define
\beaa
\widetilde H(t,\mu, p,q) := \dbE^\mu\Big[ \sup_{a\in A}  \ol h(t,  X_{[0,\ol t]}, \mu, p, q, a)\Big].
\eeaa
It is clear that $H \le \widetilde H$.  To see the opposite inequality, fix $(t,\mu, p, q)$ as specified in $(i)$.  By our conditions, it is obvious that $\o \mapsto  \ol h(t,  \o, \mu, p, q, a)$ is $\cF_{\ol t}$-measurable for each $a$, and $a   \mapsto  \ol h(t,  \o, \mu, p, q, a)$ is continuous for each $\o$. Then $(\o, a)\mapsto  \ol h(t,  \o, \mu, p, q, a)$ is $\cF_{\ol t} \times \cB(A)$-measurable. By the standard measurable selection theorem, see e.g. \cite[Proposition 2.21]{EKT}, for any $\e>0$, there exists  an $\cF^\mu_{\ol t}$-measurable random variable $a^\e$ such that 
\beaa
 \ol h(t,  X_{[0,\ol t]}, \mu, p, q, a^\e) \ge \sup_{a\in A}  \ol h(t,  X_{[0,\ol t]}, \mu, p, q, a) - \e,\q\mu\mbox{-a.s,}
 \eeaa
where $\cF^\mu_{\ol t}$ denotes the $\mu$-augmentation of $\cF_{\ol t}$. By \cite[Proposition 1.2.2]{Zhang_book}, there exists $\cF_{\ol t}$-measurable $\a^\e_t$ such that $\a^\e_t = a^\e$, $\mu$-a.s. Then
\beaa
\widetilde H(t,\mu, p,q) \le  \dbE^\mu\big[ \ol h(t,  X_{[0,\ol t]}, \mu, p, q, \a^\e_t)\big] + \e \le H(t,\mu, p,q)+\e.
\eeaa
By the arbitrariness of $\e$, we obtain $\widetilde H \le H$, and thus the equality holds.

$(ii)$ follows from similar arguments as in Theorem  \ref{thm-value}. 
\end{proof} 
 
Assume further that the following Hamiltonian $\ol H$ has an optimal argument $a^*$:
 \bea
 \label{olH}
 \ol H(t, \o, \mu,\pa_\mu V, \pa_\o\pa_\mu V)   := \sup_{a\in A} \ol h(t, \o, \mu, \pa_\mu V, \pa_\o\pa_\mu V, a).
 \eea
 By \reff{Hamiltonian2-1}, we see that $a^*$ takes the form $I(t,  \mu^{\ol t, \o}, \o_{[0,\ol t]})$. Then \reff{X2} becomes a McKean-Vlasov SDE again:
 \bea
\label{X2-2}
\qq\q d X^*_s = b\big(s, X^*_\cd,  I(s, \dbP^{\ol s, X^*}, X^*_{[0,\ol s]})\big) ds + \si\big(s, X^*_\cd,  I(s, \dbP^{\ol s, X^*}, X^*_{[0,\ol s]})\big) dW_s,  \dbP\mbox{-a.s.} 
\eea
Similar to Theorem \ref{thm-control}, one can easily prove: 
\begin{theorem}
\label{thm-control2}
Let Assumption \ref{assum-deterministic2} hold and $V\in C^{1,2}(\Th_T )$ be the classical solution to the master equation \reff{masterpath-2}. Assume further that

\begin{enumerate}[(i)]

\item the Hamiltonian $\ol H$ defined by \reff{olH} has an optimal control $a^* = I(t,  \mu^{\ol t, \o}, \o_{[0,\ol t]})$, for any $(t,\mu)\in \Th_T $, where $I: \Th_T \times\Om  \to A$ is measurable;

\item for a fixed $(t,\mu)\in \Th_T $, the McKean-Vlasov SDE \reff{X2-2} on $[t, T]$ has a (strong) solution $\dbP^*$ such that $\dbP^*_{[0,t]}  = \mu_{[0,t]}$.


\end{enumerate}

\no Then $\a^*_s := I(s, (\dbP^*)^{\ol s, \o}, \o_{[0,\ol s]})$, $s\in [t, T]$,  is an optimal control for the optimization problem \reff{Vtmu2} with this fixed $(t,\mu)$. 
\end{theorem}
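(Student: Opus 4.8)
The plan is to mirror the proof of Theorem \ref{thm-control}, now working in the weak formulation and replacing the classical It\^o formula by the functional It\^o formula \reff{FIto} of Definition \ref{defn-C12path}. First I would record the representation that already underlies the verification direction of Theorem \ref{thm-Hamiltonian2-2}(ii): for an arbitrary $\a\in\cA^\ch_t$, write $\dbP^\a:=\dbP^{t,\mu,\a}$ and apply \reff{FIto} to $s\mapsto V(s,\dbP^\a)$ along the controlled flow \reff{X2}. Since $\pa_\mu V$ and $\si$ are $\cF_s$-measurable, the stochastic-integral term drops out in expectation, and using the terminal condition $V(T,\dbP^\a)=\dbE^{\dbP^\a}[g(X_\cd)]$ together with the definition \reff{Hamiltonian2} of $h$ (which already absorbs the running cost $f$), I obtain
\beaa
J(t,\mu,\a) &=& V(t,\mu) + \int_t^T \big[\pa_t V(s,\dbP^\a) + h(s,\dbP^\a,\pa_\mu V,\pa_\o\pa_\mu V,\a_s)\big]\,ds.
\eeaa
Invoking the master equation \reff{master-path} (so $\pa_t V=-H$) and the trivial bound $h(s,\cdot,\a_s)\le H(s,\cdot)$ yields $J(t,\mu,\a)\le V(t,\mu)$ for every admissible $\a$, with equality precisely when $h(s,\dbP^\a,\pa_\mu V,\pa_\o\pa_\mu V,\a_s)=H(s,\dbP^\a,\pa_\mu V,\pa_\o\pa_\mu V)$ for a.e.\ $s$.

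Next I would show that $\a^*$ attains this equality. By hypothesis (ii), $\dbP^*$ is the strong solution of \reff{X2-2}; since $\a^*_s=I(s,(\dbP^*)^{\ol s,\o},\o_{[0,\ol s]})$ depends on $\o$ only through $X_{[0,\ol s]}$ (both $\o_{[0,\ol s]}$ and the regular conditional law $(\dbP^*)^{\ol s,\o}$ are $\cF_{\ol s}$-measurable), we have $\a^*\in\cA^\ch_t$ and $\dbP^{t,\mu,\a^*}=\dbP^*$, so it suffices to set $\a=\a^*$ in the displayed identity. The crux is the conditional-expectation bridge: by the disintegration \reff{Hamiltonian2-1} and by Theorem \ref{thm-Hamiltonian2-2}(i),
\beaa
h(s,\dbP^*,\pa_\mu V,\pa_\o\pa_\mu V,\a^*_s) &=& \dbE^{\dbP^*}\big[\ol h(s,X_{[0,\ol s]},\dbP^*,\pa_\mu V,\pa_\o\pa_\mu V,\a^*_s)\big],\\
H(s,\dbP^*,\pa_\mu V,\pa_\o\pa_\mu V) &=& \dbE^{\dbP^*}\Big[\sup_{a\in A}\ol h(s,X_{[0,\ol s]},\dbP^*,\pa_\mu V,\pa_\o\pa_\mu V,a)\Big].
\eeaa
Because $\a^*_s=I(\cdots)$ is, by optimality hypothesis (i), the maximizer of $a\mapsto\ol h(s,X_{[0,\ol s]},\dbP^*,\pa_\mu V,\pa_\o\pa_\mu V,a)$ for $\dbP^*$-a.e.\ $\o$, the two integrands coincide $\dbP^*$-a.s., hence the two expectations are equal. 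Equality therefore holds throughout, giving $J(t,\mu,\a^*)=V(t,\mu)$ and the optimality of $\a^*$.

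I expect the main obstacle to be this last identification, namely passing from the \emph{pointwise} (in $\o$) optimality of $I$ for $\ol H$ to the \emph{global} equality $h=H$ along $\dbP^*$. This is exactly where the conditional-distribution structure $(\dbP^*)^{\ol s,\o}$ and the measurable-selection argument of Theorem \ref{thm-Hamiltonian2-2}(i) are indispensable: one must verify that $\a^*_s$, built from the regular conditional law given $\cF_{\ol s}$, is genuinely $\cF_{(s-\ch)^+}$-measurable, and that under $\dbP^*$ the conditional law entering $\ol h$ really is $(\dbP^*)^{\ol s,\o}$, so that the maximizing property transfers inside the expectation. The remaining ingredients --- applicability of \reff{FIto} (guaranteed since $V\in C^{1,2}(\Th_T)$ and $\dbP^\a$ is a semimartingale measure) and the integrability needed to discard the martingale term --- are routine given Assumption \ref{assum-deterministic2} and the a priori moment bounds on the controlled state process.
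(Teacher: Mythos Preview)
Your proposal is correct and follows exactly the approach the paper intends: the paper does not give a detailed proof of Theorem \ref{thm-control2} but simply writes ``Similar to Theorem \ref{thm-control}, one can easily prove,'' and your argument is precisely the path-dependent analogue of that proof --- apply the functional It\^o formula \reff{FIto} in place of Lemma \ref{lem-Ito} to obtain the representation $J(t,\mu,\a)=V(t,\mu)+\int_t^T[\pa_t V+h]\,ds$, bound $h\le H$ via the master equation to get $J\le V$, and then check equality for $\a^*$ using the disintegration \reff{Hamiltonian2-1} and the pointwise optimality (i). Your treatment of the conditional-law step (verifying $\a^*\in\cA^\ch_t$ and that $h=H$ along $\dbP^*$) is in fact more explicit than anything the paper writes out, but it is exactly the content the paper is alluding to.
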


It will be interesting to extend Theorems \ref{thm-control-sufficient} and \ref{thm-classical} to this case. This requires the measurability and/or regularity in terms of the paths and is more challenging. We shall leave a more systematic study on these issues in future research. In the subsection below, we shall solve the linear quadratic case which extends the example in Subsection  \ref{sect-eg}.

Finally, consider a special case where $b, \si, f$ do not depend on $X$. Then
\beaa
 \ol h(t, \o, \mu, p, q, a) &=&  {1\over 2}\si\si^\top(t, a): \dbE^\mu[q(t, \mu, X_\cd)|\cF_{\ol t}]  +b(t, a)\cd  \dbE^\mu[p(t, \mu, X_\cd)|\cF_{\ol t}] + f(t, a),
\eeaa
thus $a^*$ takes the form: $a^* = I\big(t, \dbE^\mu[p(t, \mu, X_\cd)|\cF_{\ol t}] , \dbE^\mu[q(t, \mu, X_\cd)|\cF_{\ol t}]\big)$. Therefore, \reff{X2-2} becomes:
\bea
\label{X2-3}
\left.\ba{lll}
\dis d X^*_s = b\big(s, X^*_\cd,  I\big(s, \dbE[\pa_\mu V|\cF_{\ol s}] , \dbE[\pa_\o\pa_\mu V|\cF_{\ol s}] \big)\big) ds\ms\\
\dis ~ + \si\big(s, X^*_\cd,  I\big(s, \dbE[\pa_\mu V|\cF_{\ol s}] , \dbE[\pa_\o\pa_\mu V|\cF_{\ol s}] \big)\big)dW_s, \q \dbP\mbox{-a.s,}
\ea\right.
\eea
where $\pa_\mu V$ and $\pa_\o \pa_\mu V$ are computed at $(s, \cL_{X^*_{[0,s]}}, X_s)$.

\subsection{The linear-quadratic example}
Consider the path-dependent setting of  the example in Section \ref{sect-example} :
\bea
\label{example2}
\q \q d=1,~ A =\dbR,~ b(t, x, a) = a,~ \si = 1, ~ f(t,x, a) = -{1\over 2} a^2,~ g(x)=x^2,~ T= 2\ch.
\eea
In this case \reff{master-path} becomes: recalling $\ol t :=( t-\ch)^+$,
\bea
\label{master-path3}
\q\left.\ba{lll}
\dis \pa_t V(t, \mu)  +  {1\over 2} \dbE^\mu\Big[\pa_\o \pa_\mu V(t, \mu, X_\cd)\Big]  + {1\over 2} \dbE^\mu \Big[\Big|\dbE^\mu\big[\pa_\mu V(t, \mu, X_\cd)|\cF_{\ol t}\big]\Big|^2\Big] =0,\ms\\ 
\dis V(T,\mu) = \dbE^\mu[|X_T|^2].
 \ea\right.
 \eea
Moreover, provided \reff{master-path3} has a classical solution, then \reff{X2-3} reduces to:
\bea
\label{X2-4}
d X^*_s =  \dbE\big[\pa_\mu V(s,  \cL_{X^*_{[0,s]}}, X^*_s)|\cF_{\ol s}\big] ds+ dW_s, \q \dbP\mbox{-a.s.}
\eea

We shall show that \reff{master-path3} has a classical solution $V$ and that it is indeed path-dependent.

\begin{theorem}
\label{thm-eg2}
Let \reff{example2} hold. Assume  $\ch <{1\over 4}$ and denote $\ol \ch:= {1\over 2} - \ch$. 

(i)  The $V$ defined by \reff{Vtmu2} is equal to
\bea
\label{lq-Vu}
 V(t,\mu) = \left\{\ba{lll} 
\dis  \dbE^\mu\Big[|X_t|^2 +  \int_{t-\ch}^\ch { |\dbE^\mu_s[X_t]|^2\over 2  (\ol \ch +s)^2}   ds\Big] + T-t,\q t \in [\ch, 2\ch];\ms\ms \\
\dis  {\dbE^{\mu} [|X_t|^2 ] \over 2(\ol \ch +t)}  + \int_0^t {  \dbE^\mu[ |\dbE^\mu_s[X_t] |^2]\over 2  (\ol \ch +s)^2}   ds + \ch  +{1\over 2} \ln {  1\over 2  (\ol \ch+t)}\ms\ms\\
\dis\qq\qq +{\ch -t\over 2\ol \ch ({1\over 2}-2\ch +t)} \dbE^\mu\Big[\big|\dbE^\mu_0[X_t]\big|^2\Big], \q t \in [0, \ch).
 \ea\right.
\eea

\no It is in $C^{1,2}(\Th_T )$ and is  a classical solution to the path-dependent master equation \reff{master-path3};

(ii) For any $(t,\mu)\in \Th_T $,  the SDE \reff{X2-4} on $[t, T]$ with initial condition $\dbP\circ (X^*_{[0,t]})^{-1} = \mu_{[0,t]}$ has a strong solution $X^*$, and the optimal control takes the form:
\bea
\label{optimal3}
\a^*_s = \dbE^{\dbP^*}\big[\pa_\mu V(s, \dbP^*, X_\cd)|\cF_{(s-\ch)^+}\big],\q\mbox{where}\q \dbP^* = \dbP\circ (X^*)^{-1}.
\eea
\end{theorem}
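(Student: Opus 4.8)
The strategy is to exploit the linear--quadratic structure to produce the closed-form value function \reff{lq-Vu} and then rely on the equivalence already proved in Theorem \ref{thm-Hamiltonian2-2}(ii): once the explicit $V$ is shown to lie in $C^{1,2}(\Th_T)$, being the value function \reff{Vtmu2} is equivalent to solving the master equation \reff{master-path3}. Accordingly, part (i) decomposes into (a) solving the control problem to arrive at \reff{lq-Vu} (which also reveals $\pa_\mu V$ as an adjoint/optimal-control object), (b) checking $V\in C^{1,2}(\Th_T)$ in the sense of Definition \ref{defn-C12path}, and (c) substituting the derivatives into \reff{master-path3}. Because the information structure changes at $t=\ch$, each of (a)--(c) must be carried out separately on $[\ch,2\ch]$ and on $[0,\ch)$ and then matched at $t=\ch$.

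On $[\ch,2\ch]$ I would first observe that for $s\in[t,T]$ one has $\ol s=(s-\ch)^+=s-\ch\le T-\ch=\ch\le t$, so $\dbE[W_T-W_t\mid\cF_{\ol s}]=0$ and the martingale cross term drops out of $\dbE[|X_T|^2]$. Writing $\b_r:=\a_{r+\ch}$, which is $\cF_r$-measurable for $r\in[t-\ch,\ch]$, the problem reduces to maximizing $\dbE^\mu\big[(X_t+\int_{t-\ch}^{\ch}\b_r\,dr)^2-\frac12\int_{t-\ch}^{\ch}\b_r^2\,dr\big]+(T-t)$ over $\cF_r$-adapted $\b$. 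A G\^ateaux computation gives the optimality condition $\b_r=2\,\dbE^\mu[Y\mid\cF_r]$ with $Y:=X_t+\int_{t-\ch}^{\ch}\b_r\,dr$. Setting $N_u:=\dbE^\mu[Y\mid\cF_u]$ and using $\dbE^\mu[N_r\mid\cF_u]=N_{r\wedge u}$ turns the fixed point into the linear equation $2(\ol\ch+u)N_u=\dbE^\mu_u[X_t]+2\int_{t-\ch}^{u}N_r\,dr$, where the coefficient $2(\ol\ch+u)=1-2(\ch-u)$ is precisely how $\ol\ch$ enters. Solving this linear ODE with integrating factor $(\ol\ch+u)^{-1}$ gives $N_u$ explicitly in terms of $\dbE^\mu_\cdot[X_t]$, and substituting $\b=2N$ back into the objective yields the first line of \reff{lq-Vu}.

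For $t\in[0,\ch)$ I would apply the dynamic programming principle \reff{pathDPP} to step from $t$ to $\ch$: the controls on $[t,\ch]$ are $\cF_0$-measurable while the continuation value at $\ch$ is the already-computed $V(\ch,\cdot)$, so one maximizes a quadratic functional over the $\cF_0$-measurable drift on $[t,\ch]$; this extra interval is what generates the logarithmic term $\frac12\ln\frac{1}{2(\ol\ch+t)}$ and the final $\dbE^\mu[|\dbE^\mu_0[X_t]|^2]$ term in the second line of \reff{lq-Vu}. One checks that the two lines agree at $t=\ch$ (both give $\dbE^\mu[|X_\ch|^2]+\int_0^\ch\frac{\dbE^\mu[|\dbE^\mu_s[X_\ch]|^2]}{2(\ol\ch+s)^2}\,ds+\ch$), establishing continuity, and likewise that $\pa_t V$ matches, so $V$ is $C^1$ in time. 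Since $V$ is quadratic in the state, $\pa_\mu V(t,\mu,\cdot)$ is affine in the path and $\pa_\o\pa_\mu V$ is a deterministic (path-independent) coefficient; computing them from \reff{lq-Vu}, verifying the regularity and functional It\^o requirements of Definition \ref{defn-C12path}, and substituting into \reff{master-path3} on each region completes (b) and (c). Theorem \ref{thm-Hamiltonian2-2}(ii) then identifies \reff{lq-Vu} as the value function, proving (i).

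Part (ii) follows from the same linear structure: because $\pa_\mu V(s,\mu,X_\cdot)$ is affine in $X_s$, the McKean--Vlasov SDE \reff{X2-4} has coefficients affine in $X^*_s$ and in the conditional mean $\dbE[\,\cdot\mid\cF_{\ol s}]$, so one first solves the closed linear system for the conditional means $\dbE[X^*_s\mid\cF_u]$ and then the resulting linear SDE with known coefficients, obtaining a strong solution; Theorem \ref{thm-control2} then yields the optimal control \reff{optimal3}. The step I expect to be the main obstacle is (b)--(c): rigorously computing the measure derivative of the conditional-expectation functionals $\mu\mapsto\dbE^\mu[|\dbE^\mu_s[X_t]|^2]$ in \reff{lq-Vu}, verifying that the functional It\^o formula \reff{FIto} holds for all semimartingale measures, and matching $\pa_t V$ across $t=\ch$; by contrast the control-theoretic derivation and the McKean--Vlasov wellposedness are routine once the optimality condition $\a_s=2\,\dbE^\mu[X_T\mid\cF_{\ol s}]$ is in hand.
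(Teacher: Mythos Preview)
Your proposal is correct and follows essentially the same route as the paper: shift to $\b_r=\a_{r+\ch}$ on $[\ch,2\ch]$, obtain the fixed-point condition $\b_r=2\,\dbE^\mu_r[Y]$ and reduce it to the linear ODE in $(\ol\ch+u)$ that you wrote (the paper writes it for $\b^*=2N$ directly), then use DPP from $t$ to $\ch$ on $[0,\ch)$, compute $\pa_t V,\pa_\mu V,\pa_\o\pa_\mu V$ explicitly on each region, plug into \reff{master-path3}, and finally use the affine structure of $\pa_\mu V$ to write out $X^*$ and $\a^*$ explicitly. The only minor differences are emphasis: you invoke Theorem~\ref{thm-Hamiltonian2-2}(ii) as a logical bridge (the paper simply verifies both directions independently) and you flag the $t=\ch$ matching and the rigorous computation of $\pa_\mu$ for $\mu\mapsto\dbE^\mu[|\dbE^\mu_s[X_t]|^2]$ as the delicate point, which the paper handles only formally ``in the spirit of \reff{pamuV2}''.
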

\begin{proof}  We proceed in several steps.  Recall that $\ol t := (t-\ch)^+$.

{\it Step 1.} We first prove \reff{lq-Vu} for $t\in [\ch, 2\ch]$. Let $\mu\in \cP_2(\cF_T)$, $\a\in \cA^\ch_t$. Denote $\b_s := \a_{s+\ch}$, $s\in [0, \ch]$. Then $\b$ is $\dbF$-progressively measurable. Denote by $\mu\otimes_t \dbP_0$ the probability measure $\dbP$ such that $\dbP_{[0,t]}=\mu_{[0, t]}$ and $X_s =X_t+ W_s-W_t$, $\dbP$-a.s. for a $\dbP$-Borwnian motion $W$. By \reff{X2} and \reff{Vtmu2} one can easily see that
\bea
\label{Jua}
J(t,\mu,\a) &=& \dbE^{\mu\otimes_t \dbP_0}\Big[\Big|X_t + \int_{\ol t}^\ch \b_r dr + W_T-W_t\Big|^2 - {1\over 2} \int_{\ol t}^\ch \b_r^2 dr\Big]  \nonumber\\
&=& \dbE^\mu\Big[\Big|X_t + \int_{\ol t}^\ch \b_r dr \Big|^2 - {1\over 2} \int_{\ol t}^\ch \b_r^2 dr\Big] + T-t.
\eea
Since $\ch<{1\over 4}$ and $\b$ is $\dbF$-progressively measurable, one can easily show that the optimal $\b^*$ is the unique fixed point satisfying:
\beaa
\b^*_s = 2 \dbE^\mu_s\Big[X_t + \int_{\ol t}^\ch \b^*_r dr\Big],\q \ol t \le s\le \ch.
\eeaa
This implies that $\b^*$ is a $\mu$-martingale. Then
\beaa
\b^*_s =  2\dbE^\mu_s[X_t] + 2\int_{\ol t}^s \b^*_r dr +  2[\ch-s]\b^*_s.
\eeaa
 Solving this ODE, we obtain:
\bea
\label{beta*}
\b^*_s = \int_{\ol t}^s {\dbE^\mu_r[X_t]\over (\ol \ch+r)^2}dr + {\dbE^\mu_s[X_t]\over \ol \ch+s}.
\eea
Then by \reff{Vtmu2} and \reff{Jua} we have 
\bea
\label{Vtmu3}
V(t,\mu) = \dbE^\mu\Big[\big|X_t + \int_{\ol t}^\ch \b^*_sds \big|^2  - {1\over 2} \int_{\ol t}^\ch |\b^*_s|^2 ds\Big] + T-t.
\eea
Note that \reff{beta*} implies
\bea
\label{beta*int}
\int_{\ol t}^s \b^*_r dr = \left(\ol \ch+s\right)\int_{\ol t}^s {\dbE^\mu_r[X_t]\over  (\ol \ch+r)^2} dr,
\eea
and thus
\beaa
&& \dbE^\mu\Big[\Big|X_t + \int_{\ol t}^\ch \b^*_s ds \Big|^2 \Big] = \dbE^\mu\Big[\Big|X_t + \int_{\ol t}^\ch {\dbE^\mu_s[X_t]\over 2 (\ol \ch+s)^2} ds \Big|^2 \Big] \\
 &&=  \dbE^\mu\Big[|X_t|^2 + X_t \int_{\ol t}^\ch {\dbE^\mu_s[X_t]\over  (\ol \ch+s)^2} ds + \big|\int_{\ol t}^\ch {\dbE^\mu_s[X_t]\over  2(\ol \ch+s)^2} ds \big|^2 \Big] \\
 &&= \dbE^\mu\Big[|X_t|^2 + \int_{\ol t}^\ch {|\dbE^\mu_s[X_t]|^2\over  (\ol \ch+s)^2} ds +\int_{\ol t}^\ch \int_s^\ch {|\dbE^\mu_s[X_t]|^2\over  2(\ol \ch+s)^2(\ol \ch +r)^2}   dr ds  \Big]\\
 &&= \dbE^\mu\Big[|X_t|^2 + \int_{\ol t}^\ch {|\dbE^\mu_s[X_t]|^2\over  2(\ol \ch+s)^3} ds\Big].
 \eeaa
 Moreover, note that
 \beaa
 &&\dbE^\mu[|\b^*_s|^2]  = \dbE^\mu\Big[{|\dbE^\mu_s[X_t]|^2\over (\ol \ch+s)^2} + {2\dbE^\mu_s[X_t]\over \ol \ch+s}\int_{\ol t}^s {\dbE^\mu_r[X_t]\over ({1\over 2}-H+r)^2 }dr + \big|\int_{\ol t}^s {\dbE^\mu_r[X_t]\over (\ol \ch +r)^2}\big|^2 dr \Big] \\
 &&= \dbE^\mu\Big[{|\dbE^\mu_s[X_t]|^2\over (\ol \ch +s)^2} + {2\over \ol \ch +s}\int_{\ol t}^s {|\dbE^\mu_r[X_t]|^2\over (\ol \ch+r)^2}dr + 2\int_{\ol t}^s {|\dbE^\mu_r[X_t]|^2\over (\ol \ch +r)^2}\big[{ 1\over \ol \ch +r} - { 1\over \ol \ch +s}\big]dr \Big] \\
 &&=\dbE^\mu\Big[{|\dbE^\mu_s[X_t]|^2\over (\ol \ch +s)^2}  + 2\int_{\ol t}^s {|\dbE^\mu_r[X_t]|^2\over (\ol \ch+r)^3}dr \Big], 
 \eeaa
 and thus
 \beaa
 \dbE^\mu\Big[\int_{\ol t}^\ch |\b^*_s|^2ds\Big]  &=&  \dbE^\mu\Big[\int_{\ol t}^\ch {|\dbE^\mu_s[X_t]|^2\over (\ol \ch +s)^2}  ds +2 \int_{\ol t}^\ch {(\ch -s)|\dbE^\mu_s[X_t]|^2\over (\ol \ch+s)^3}  ds  \Big]  \\
 &=&\dbE^\mu\Big[ \int_{\ol t}^\ch {({1\over 2}+\ch -s)|\dbE^\mu_s[X_t]|^2\over (\ol \ch +s)^3}  ds  \Big].
 \eeaa
Plug these into \reff{Vtmu3}, we obtain 
\beaa
V(t,\mu) = \dbE^\mu\Big[|X_t|^2 + \int_{\ol t}^\ch {|\dbE^\mu_s[X_t]|^2\over 2 (\ol \ch+s)^3} ds - \int_{\ol t}^\ch {({1\over 2}+\ch-s)|\dbE^\mu_s[X_t]|^2\over 2(\ol \ch+s)^3}  ds\Big] + T-t,
\eeaa
which implies the first equality of  \reff{lq-Vu}  immediately.

{\it Step 2.} We next prove  \reff{lq-Vu}  for $t<\ch$. Set $t=\ch$ in Step 1, we have
\beaa
V(\ch,\mu) =   \dbE^\mu\Big[|X_\ch|^2 +  \int_0^\ch { |\dbE^\mu_s[X_\ch]|^2\over 2  (\ol \ch+s)^2}   ds\Big] + \ch.
\eeaa
Fix  $\mu \in \cP_2(\cF_T)$ and recall  $\mu\otimes_t \dbP_0$.  Note that  $\a\in \cA^\ch_t$ is $\cF_0$-measurable. Then, 
\beaa
V(\ch, \dbP^{t,\mu, \a}) &=& \dbE^{\mu\otimes_t \dbP_0} \Big[|X_t + \int_t^\ch \a_r dr + W_\ch -W_t|^2 \\
&&\qq +  \int_0^\ch { |\dbE^{\mu\otimes_t \dbP_0}_s[X_t + \int_t^\ch \a_r dr + W_\ch -W_t]|^2\over 2  (\ol \ch +s)^2}   ds\Big] + \ch\\
&=&\dbE^{\mu} \Big[|X_t + \int_t^\ch \a_r dr|^2\Big] + 2\ch -t \\
&&+  \dbE^\mu\Big[\int_0^t { |\dbE^\mu_s[X_t] + \int_t^\ch \a_r dr|^2\over 2  (\ol \ch+s)^2}   ds + \int_t^\ch { |X_t + \int_t^\ch \a_r dr|^2 + s-t\over 2  (\ol \ch+s)^2}   ds  \Big] \\
&=& \G_t +{1\over 2\ol \ch }\dbE^\mu\Big[2\dbE^\mu_0[X_t]\int_t^\ch \a_r dr + \big|\int_t^\ch  \a_rdr\big|^2\Big],\\
\mbox{where}&& \G_t := {\dbE^{\mu} [|X_t|^2 ] \over 2(\ol \ch+t)}  + \int_0^t {  \dbE^\mu[ |\dbE^\mu_s[X_t] |^2]\over 2  (\ol \ch+s)^2}   ds + \ch  + {1\over 2}\ln {  1\over 2  (\ol \ch +t)} . 
\eeaa
By the DPP \reff{pathDPP}, we have
\beaa
V(t,\mu) &=& \sup_{\a\in \cA^\ch_t} \Big[V(\ch, \dbP^{t,\mu, \a}) - {1\over 2} \int_t^\ch \dbE^\mu[\a_s^2] ds\Big] \\
&=&\G_t+ \sup_{\a\in \cA^\ch_t}{1\over 2\ol \ch} \dbE^\mu \Big[2\dbE^\mu_0[X_t]\int_t^\ch \a_r dr + \big|\int_t^\ch \a_rdr\big|^2 - \ol\ch \int_t^\ch \a_s^2 ds\Big].
\eeaa
One can easily see that the optimal $\a^*$ satisfies:
\beaa
\dbE^\mu_0[X_t] +  \int_t^\ch  \a^*_r dr =\ol\ch \a_s^*.
\eeaa
This implies 
\bea
\label{alpha*2}
\a^*_s = c_t:= {\dbE^\mu_0[X_t]\over \ol \ch +\ol t}, \q t\le s \le \ch ,
\eea
and thus
\beaa
V(t,\mu) &=&\G_t+ {1\over 2\ol \ch} \dbE^\mu\Big[2\dbE^\mu_0[X_t]\int_t^\ch\a^*_r dr + \big|\int_t^\ch  \a^*_rdr\big|^2- \ol\ch \int_t^\ch |\a^*_s|^2 ds\Big],
\eeaa
which implies the second equality of \reff{lq-Vu} immediately. 

{\it Step 3.} We now verify that $V\in C^{1,2}(\Th_T)$ and satisfies \reff{master-path3}. First consider $t\in [\ch, 2\ch]$. 
By \reff{lq-Vu} one may verify straightforwardly that
\bea
\label{patV3}
 \pa_t V(t,\mu) &=& - {\dbE^\mu\big[ |\dbE^\mu_{\ol t}[X_t]|^2\big]\over 2  ( \ol \ch +\ol t)^2}   -1.
\eea
To see $\pa_\mu V$, we remark that the $V$ in \reff{lq-Vu} is very smooth and actually one can use the stronger definition in the sprit of \reff{pamuV2} instead of Definition \ref{defn-C12path}. We again refer to \cite{WZ} for details and will derive $\pa_\mu V$ formally.  Given a random variable $X'_t \in \dbL^2(\cF_t)$, in the sprit of \reff{pamuV2} we have
 \beaa
\dbE\left[ \pa_\mu V(t,\mu, X_\cd) \ X_t' \right] &=&  \dbE^\mu\Big[2X_tX'_t +  \int_{\ol t}^\ch { \dbE^\mu_s[X_t] \dbE^\mu_s[X'_t] \over   (\ol \ch+s)^2} ds\Big]\\
&=& \dbE^\mu\Big[\big[2X_t +  \int_{\ol t}^\ch { \dbE^\mu_s[X_t]  \over   (\ol \ch+s)^2} ds\big] X'_t\Big].
\eeaa
Then
\bea
\label{pamuV3}
 \pa_\mu V(t,\mu, X_\cd) = 2X_t+  \int_{\ol t}^\ch { \dbE^\mu_s[X_t] \over   (\ol \ch +s)^2}  ds.
 \eea
 Moreover, note that  $\int_{\ol t}^\ch { \dbE^\mu_s[X_t] \over   (\ol \ch+s)^2}  ds$ is $\cF_\ch$-measurable and $\ch\le t$, then its path derivative with respect to $\o$ is $0$, and thus 
 \bea
 \label{paomuV3}
 \pa_\o  \pa_\mu V(t,\mu, X_\cd) = 2.
\eea
Note that, by \reff{pamuV3},
\beaa
\dbE^\mu\big[\pa_\mu V(t, \mu, X_\cd)|\cF_{\ol t}\big] = \dbE^\mu_{\ol t}[ X_t]  \Big[ 2+ \int_{\ol t}^\ch {1 \over   (\ol \ch+s)^2}  ds\Big] = { \dbE^\mu_{\ol t}[ X_t] \over  \ol \ch +\ol t}.
\eeaa
Then
\beaa
&& \pa_t V(t, \mu)  +  {1\over 2} \dbE^\mu\Big[\pa_\o \pa_\mu V(t, \mu, X_\cd)\Big] + {1\over 2} \dbE^\mu \Big[\Big|\dbE^\mu_{\ol t}\big[\pa_\mu V(t, \mu, X_\cd)\big]\Big|^2\Big] \\
&=& \Big[ - {\dbE^\mu\big[ |\dbE^\mu_{\ol t}[X_t]|^2\big]\over 2  ( \ol \ch +\ol t)^2}   -1 \Big]+ {1\over 2} \ 2 +  {1\over 2}\dbE^\mu\Big[\Big| { \dbE^\mu_{\ol t}[ X_t] \over \ol \ch +\ol t}\Big|^2\Big] =0.
\eeaa
That is, $V$ satisfies \reff{master-path3} for $t\in [\ch, 2\ch]$.

Next consider $t\in [0, \ch)$. By \reff{lq-Vu} one may verify directly that
\beaa
\pa_t V(t,\mu) &=&- {\dbE^{\mu} [|X_t|^2 ] \over 2(\ol \ch +t)^2}  +  {  \dbE^\mu[ |X_t |^2]\over 2  (\ol \ch +t)^2}   -{  1\over 2  (\ol \ch +t)}-{\dbE^\mu\big[\big|\dbE^\mu_0[X_t]\big|^2\big]\over 2( \ol \ch +\ol t)^2} \\
&=&-{\dbE^\mu\big[\big|\dbE^\mu_0[X_t]\big|^2\big] \over 2( \ol \ch +\ol t)^2} - {  1\over 2(\ol \ch +t)};\\
\pa_\mu V(t, \mu, X_\cd) &=&{X_t  \over \ol \ch +t}  + \int_0^t { \dbE^\mu_s[X_t]\over   (\ol \ch +s)^2}   ds +{(\ch -t)\dbE^\mu_0[X_t]\over \ol \ch( \ol \ch +\ol t)} ;\\
\pa_\o \pa_\mu V(t, \mu, X_\cd) &=&{1 \over \ol \ch +t}.
\eeaa
Note that
\beaa
\dbE^\mu_0\Big[\pa_\mu V(t, \mu, X_\cd)\Big]  = {\dbE^\mu_0[X_t]  \over \ol \ch +t}  + \int_0^t { \dbE^\mu_0[X_t]\over   (\ol \ch +s)^2}   ds +{(\ch -t)\dbE^\mu_0[X_t]\over \ol \ch( \ol \ch +\ol t)}=  {\dbE^\mu_0[X_t]  \over  \ol \ch +\ol t}.
\eeaa
Therefore,
\beaa
&& \pa_t V(t, \mu)  +  {1\over 2} \dbE^\mu\Big[\pa_\o \pa_\mu V(t, \mu, X_\cd)\Big] + {1\over 2} \dbE^\mu \Big[\Big|\dbE^\mu_0\big[\pa_\mu V(t, \mu, X_\cd)\big]\Big|^2\Big] \\
&&= -{\dbE^\mu\big[\big|\dbE^\mu_0[X_t]\big|^2\big] \over 2( \ol \ch +\ol t)^2} - {  1\over 2(\ol \ch +t)} + {1\over 2} {1 \over \ol \ch +t} + {1\over 2}\dbE^\mu\Big[ \Big|{\dbE^\mu_0[X_t]  \over  \ol \ch +\ol t}\Big|^2\Big] =0.
 \eeaa
 That is, $V$ is a classical solution to \reff{master-path3} on $[0, H]$ as well.
 
 {\it Step 4.} Finally, we prove (ii).  Note that  in this case  \reff{X2-4} can be rewritten as:
\bea
\label{X2-5}
dX^*_s = \a^*_s ds + dW_s.
\eea
   If $t\ge \ch$, the optimal control is $\a^*_s = \b^*_{\ol s}$ for the $\b^*$ defined by \reff{beta*}:
   \bea
   \label{a*3}
   \a^*_s = \int_{\ol t}^{\ol s} {\dbE^\mu_r[X_t]\over (\ol \ch+r)^2}dr + {\dbE^\mu_{\ol s}[X_t]\over \ol \ch+\ol s},\q  t\le s\le 2\ch.
   \eea
   Then,  by  \reff{beta*int},  it follows from \reff{X2-5} that   
 \bea
 \label{X*3}
X^*_s = X_t+ [ \ol \ch +\ol s] \int_{\ol t}^{\ol s} {\dbE^\mu_r[X_t]\over  (\ol \ch+r)^2} dr+ W_s-W_t,\q t\le s\le 2\ch.
\eea
 
 Now assume $t<\ch$. For $s\in [t, \ch]$, we have $\a^*_s = c_t$, where $c_t$ is defined by \reff{alpha*2}. For $s\in (\ch, 2\ch]$, the optimal $\a^*$ is obtained through the optimization problem $V(\ch, \dbP^{t,\mu, c_t})$. By  using \reff{beta*}, \reff{beta*int},  \reff{X2-5},   it follows from direct calculation that
  \bea
 \label{X*4}
\qq&&  X^*_s = \left\{\ba{lll}
 \dis X_t + c_t [s-t] + W_s-W_t,\q t\le s\le \ch;\\ \\
 \dis X^*_\ch + ( \ol \ch +\ol s)\int_0^{\ol s} {\dbE^\mu_r[X_t] + c_t[\ch -t]\over (\ol \ch +r)^2}dr + W_s-W^\ch_s,~ \ch <s\le t+\ch ;\\ \\
 \dis X^*_{\ch} +( \ol \ch +\ol s)\Big[\int_0^t {\dbE^\mu_r[X_t] + c_t[\ch -t]\over (\ol \ch +r)^2}dr  \\
 \dis\qq\qq + \int_t^{\ol s} {X^*_r+ c_t[\ch -r]\over (\ol \ch +r)^2}dr\Big] +W_s- W_\ch,\q t+\ch <s\le 2\ch;
 \ea\right.\\
 \label{a*4}
\qq&& \a^*_s =
 \left\{\ba{lll}
 \dis c_t,\q t\le s\le \ch;\ms\\ \\
\dis  {\dbE^\mu_{\ol s}[X_t] + c_t[\ch -t] \over  \ol \ch +\ol s} + \int_0^{\ol s} {\dbE^\mu_r[X_t] + c_t[\ch -t]\over (\ol \ch +r)^2}dr,~ \ch <s\le t+\ch;\ms\\ \\
\dis {X^*_{\ol s} + c_t [2\ch -s]\over    \ol \ch +\ol s} +  \int_0^t {\dbE^\mu_r[X_t] + c_t[\ch -t]\over (\ol \ch +r)^2}dr \\ 
\dis\qq\qq\qq +  \int_t^{\ol s} {X^*_r + c_t (\ch-r) \over   (\ol \ch +r)^2}  dr,\q  t+\ch <s\le 2\ch.
 \ea\right.
 \eea
   This completes the proof.
 \end{proof}
 
  \begin{remark}
 \label{rem-example}
 {\rm Consider the  special case with $t=0$ and $\mu = \d_{x_0}$.
 
 (i)  By the above results,  we have
   \bea
   \label{V5}
 V_0 = V(0, \d_{x_0}) 
&=&{x_0^2 \over 2\ol \ch}   + \ch +{1\over 2} \ln {  1\over 2  \ol \ch}+{\ch \over 2\ol \ch({1\over 2}-2\ch)} x_0^2\\
&=& {x_0^2\over 1-4\ch} + \ch -{1\over 2} \ln(1-2\ch). \nonumber
\eea
The optimal control and the optimal state process are: noting  that  $c_0 = \displaystyle {x_0\over {1\over 2}-2\ch}$,
  \bea
  \label{a*5}
   \a^*_s &=&
 \left\{\ba{lll}
 \dis c_0,\q 0\le s\le \ch;\ms\\ \\
\dis {X_{s-\ch} + c_0 [2\ch-s]\over   \ol \ch +\ol s}   +  \int_0^{\ol s} {X_r + c_0 (\ch-r) \over   ( \ol \ch +r)^2}  dr,~ \ch <s\le 2\ch.
 \ea\right.\\
  \label{X*5}
 X^*_s &=& \left\{\ba{lll}
 \dis x_0 + c_0 s + W_s,\q 0\le s\le \ch;\\ \\
 \dis X^*_{\ch} +(\ol\ch+\ol s) \int_0^{\ol s} {X^*_r+ c_0[\ch -r]\over (\ol \ch +r)^2}dr + W_s - W_{\ch},~ \ch<s\le 2\ch,
 \ea\right.
 \eea

 (ii) If the delay time is $2\ch$ (and $T$ is still $2\ch$), namely considering only deterministic controls $\a$, then
 \beaa
 J(0, x_0, \a) &=& \dbE^{\dbP_0} \Big[\big|x_0 + \int_0^T \a_s ds + W_T\big|^2 - {1\over 2} \int_0^T |\a_s|^2ds\Big] \\
 &=& \big|x_0 + \int_0^T \a_s ds\big|^2  + T - {1\over 2} \int_0^T |\a_s|^2ds.
 \eeaa
 One can easily show that the optimal control and optimal values are, using superscript $2\ch$ to denote the delay time and recalling $T=2\ch$,
 \bea
 \label{V02H}
 \a^{2\ch}_s = {2x_0\over 1-4\ch}, \q  V^{2\ch}_0 = {x_0^2\over 1-4\ch} + 2\ch.
 \eea
 
 (iii) If the delay time is $0$, then we have a standard HJB equation:
 \beaa
 \pa_t v(t,x) + {1\over 2} \pa_{xx} v + {1\over 2} |\pa_x v|^2 =0,\q v(T,x) = |x|^2.
 \eeaa
 One can easily see that the above PDE has a classical solution 
 \beaa
 v(t,x) = {x^2 \over 1-2T + 2t} - {1\over 2} \ln (1-2T+2t).
 \eeaa
 This implies that, using superscript $0$ to denote the delay time $0$ and recalling $T=2\ch$,
 \bea
 \label{V00}
\qq \a^0(s,x) =\pa_x v(t,x) =  {2x\over 1-4\ch +2t}, ~  V^{0}_0 = v(0,x_0)={x_0^2\over 1-4\ch} - {1\over 2} \ln (1-4\ch).
 \eea
 
 (iv) One can verify straightforwardly that, for $0<\ch <{1\over 4}$, 
 \beaa
 2\ch < \ch-{1\over 2} \ln(1-2\ch) <  - {1\over 2} \ln (1-4\ch), \q\mbox{which implies}\q V^{2\ch}_0 < V_0 < V^0_0.
 \eeaa
This indicates that the information delay indeed decreases the value function, consistent with our intuition. 
 \qed}
 \end{remark}

\appendix

\section{}
\label{sect-appendix}
\setcounter{equation}{0}
In this appendix, we show heuristically how the stochastic maximum principle leads to the same structure as in Section \ref{sect-deterministic}. We remark that this approach has also been used by  \cite{HT} recently for a mixture of deterministic and stochastic controls in a linear quadratic setting. To focus on the main idea and simplify the presentation, we consider the following simple case with deterministic controls $\a\in \cA_0$:
\bea
\label{AppendixV0}
\left.\ba{c}
\dis V_0 := \sup_{\a\in \cA_0} J(\a) := \sup_{\a\in \cA} \dbE\Big[g(X^\a_T) + \int_0^T f(t, \a_t) dt\Big], \\
\mbox{where}~ X^\a_t = x + \dis \int_0^t b(s, \a_s) ds + W_t,
\ea\right.
\eea
where $\cA_0$ is the set of all Borel measurable functions $\a: [0, T] \to A$.  

Since $\cA_0$ is convex, namely, for $\a, \a'\in \cA$, we have $\a + \e(\a'-\a) \in \cA$ for all $\e\in (0,1)$.  Fix  $\a, \a'\in \cA$ and denote $\D \a := \a'-\a$, $\a^\e := \a + \e\D \a$.  Assume $b$ and $f$ are continuous differentiable in $a$ and $g$ is continuously differentiable in $x$. Then
\beaa
\td X_t &:=&\lim_{\e\to 0} {X^{\a^\e}_t - X^{\a}_t\over \e} = \int_0^t  \pa_a b(s, \a_s) \D \a_s ds,\\
\td J &:=&\lim_{\e\to 0} {J(\a^\e)-J(\a)\over \e} =\dbE\Big[ \pa_xg(X^{\a}_T) \td X_T + \int_0^t  \pa_a f(s, \a_s) \D \a_s ds \Big]. 
\eeaa
Let $(\widetilde Y^\a, \widetilde Z^\a)$ be the solution to the following BSDE:
\beaa
\widetilde Y^\a_t = \pa_xg(X^{\a}_T)  - \int_t^T \widetilde Z^\a_s dW_s.
\eeaa
We emphasize that $(\widetilde Y^\a, \widetilde Z^\a)$ depend on $\a$, but not on $\D \a$. Then
\bea
\label{tdJ}
\td J &=& \dbE\Big[ \int_t^T\big[\widetilde Y^\a_s   \pa_a b(s, \a_s)  + \pa_a f(s, \a_s)\big]  \D \a_s ds \Big] \nonumber\\
& = & \int_t^T \Big[\dbE[\widetilde Y^\a_s]   \pa_a b(s, \a_s)  + \pa_a f(s, \a_s)\Big]  \D \a_s ds \Big], 
\eea
where the second equality relies on the fact that $\a$ and $\D \a$ are deterministic. Now assume $\a^*\in \cA_0$ is an optimal argument, then $\D J\le 0$ for all possible $\D \a$. Assume further that $\a^*$ is an inner point of $\cA$ in the sense that one may choose $\D \a$ in all directions. Then 
\bea
\label{SMP}
\dbE[\widetilde Y^{\a^*}_t]   \pa_a b(t, \a^*_t)  + \pa_a f(t, \a^*_t) =0.
\eea
Assume $b$ and $f$ are such that the above equation determines a function $\widetilde I(t,x)$ such that $\a^*_t =\widetilde I(t, \dbE[\widetilde Y^{\a^*}_t] )$. Then, denoting $X^*:= X^{\a^*},\widetilde Y^*:= \widetilde Y^{\a^*}, \widetilde Z^*:= \widetilde Z^{\a^*}$, we obtain the following coupled forward backward  SDE:
\bea
\label{FBSDE*}
 X^*_t = x + \int_0^t b(s, \widetilde I(s, \dbE[\widetilde Y^*_s])) ds +  W_t, \qq \widetilde Y^*_t =   \pa_xg(X^*_T)  - \int_t^T \widetilde Z^*_s dW_s.
\eea
We emphasize that the above FBSDE is of McKean-Vlasov type because the forward one includes $\dbE[\widetilde Y^*_s]$, which is determined by the law of $\widetilde Y^*_s$ rather than the value of  $Y^*_s$.  Assume the above FBSDE is well-posed and we have the decoupling field: $\widetilde Y^*_t= \widetilde V(t, \cL_{X^*_t},  X^*_t)$, which without surprise involves the law of $X^*$.  Denote $I(t,\mu) := \widetilde I\big(t, \dbE[\widetilde V(t, \mu, \xi)]\big)$, where as usual $\cL_\xi = \mu$. Then $\widetilde I(t, \dbE[\widetilde Y^*_t]) =I(t, \cL_{X^*_t})$, and thus
\bea
\label{AppendixX*}
X^*_t = x + \int_0^t b(s, I(s,  \cL_{X^*_s})) ds +  W_t,
\eea
which is consistent with \reff{MV}.

\begin{remark}
\label{rem-SMP2}
{\rm When the control $\a_t$ is $\cF_t$-measurable, the first equality of \reff{tdJ} still holds but the second fails. Due to the arbitrariness of $\D \a$, in this case the first order condition \reff{SMP} becomes:
\bea
\label{SMP2}
\widetilde Y^{\a^*}_t   \pa_a b(t, \a^*_t)  + \pa_a f(t, \a^*_t) =0.
\eea
This leads to $\a^*_t= \widetilde I(t, \widetilde Y^{\a^*}_t)$ which in turn leads to a standard FBSDE. These are very standard arguments in the literature. Again, here due to our constraint of deterministic control, the optimal control $\a^*_t$ depends on $\dbE[\widetilde Y^*_t]$ instead of $\widetilde Y^*_t$, and hence depends on the law of $X^*_t$. 
}
\end{remark}

\section{}
\label{sect-appendix_comparison}
In this appendix, we will show some mathematical details of the discussion outlined in Section \ref{sec:review}. Specifically, we will describe some aspects of the noisy observation case and how it compares to ours. The state variable $X$ is governed by the dynamics (\ref{Ja}). For simplicity of notation, we assume $d=1$ and $\sigma \equiv 1$ in what follows. Then, 
 \bea
 \label{Ja-app}
 \left.\ba{c}
\dis X_s^{t, \xi, \a} = \xi + \int_t^s b(r, X_r^{t, \xi, \a}, \a_r)dr + W_s - W_t, ~ s\in [t, T],\\
\dis J(t,p, \a) := \dbE^p\left[g(X_T^{t, \xi, \a}) + \int_t^T f(s, X_s^{t, \xi, \a}, \a_s)ds \right],
\ea\right.
\eea
for $\xi$ with probability density $p$. Differently from the previous control problem, the agent observes a non-linear noisy process given by
$$Y_s = \int_t^s h(r, X_r^{t, \xi, \a}) dr + \widetilde{W}_s,$$
where $\widetilde W$ is a Brownian motion independent of $W$. Thus, an admissible control $\a$ has to be progressively measurable with respect to the filtration generated by $Y$, $\{\cF^Y_s\}_{s \in [t,T]}$. We will denote this space by $\widetilde \cA_{[t,T]}$. Hence, the value function is given by
\bea
\label{Vt-app}
V(t, p) := \sup_{\a\in \widetilde \cA_{[t,T]}} J(t,p, \a).
\eea

We will follow closely the approach of \cite{partial_obs_benes_karatzas}. First we introduce some notations. Given two functions $\f, \psi: \dbR^d \to \dbR$, denote $\la \varphi, \psi\ra := \int_{\dbR^d} \varphi(z) \psi(z) dz$.  Given a function $F : L^2(\dbR) \longrightarrow \dbR$, its derivative with respect to $p$ is a function   $\pa_p F(p): L^2(\dbR) \longrightarrow \dbR$, defined  in the G\^{a}teaux sense:
$$
 \left. {d \over d\e} F(p + \e \varphi) \right|_{\e = 0} =\la \pa_p F(p),  \varphi\ra,
 $$
for appropriate test function $\f: \dbR^d\to \dbR$. The second order derivative $\pa_{pp} F(p)$ is defined similarly through $\la \pa_{pp} F, [\f, \psi]\ra$ and can be viewed as a bilinear mapping. Moreover, $\pa_\mu$ and $\pa_p$ are related through the equation (see e.g. \cite{master_eq_bensoussan}):  for measure $\mu$ with density $p$,
\bea
 \label{eq:mu_p_der}
\partial_\mu F(\mu, x) = \partial_x \partial_p F(p)(x).
\eea

\cite{partial_obs_benes_karatzas}  show that dynamics of a proper unnormalized density of the distribution of $X_s^\a$ given $\cF_s^Y$, denoted by $\rho_s$, is given by
$$d\rho_s^{t,p}(x) = \cL^{\ast \a_s}_s \rho_s^{t,p}(x) dt + h(s,x) \rho_s^{t,p}(x) dY_s,$$
with $\rho_t^{t,p} = p$, which is the unnormalized density of $X_t$, and
\beaa
\cL^{\ast a}_s  &=& \frac{1}{2} \pa_{xx} - b(s,x,a) \pa_x  - \pa_x b(s,x,a).
\eeaa
Moreover, one may write
\beaa 
J(t, p, \a) = \dbE\left[\frac{\la g, \rho_T^{t,p}\ra}{\la 1, \rho_T^{t,p}\ra} + \int_t^T\frac{\la f(s, \cdot, \a_s), \rho_s^{t,p}\ra}{\la 1, \rho_s^{t,p}\ra} ds \right]. 
\eeaa
Under certain condition, $V$ satisfies the following HJB equation (see \cite[Equations (2.14)-(2.15)]{partial_obs_benes_karatzas}) with terminal condition $V(T,p) = \la g,p\ra$:
\bea \label{eq:hjb_mortensen}
\pa_t V(t, p) &+& \frac{1}{2} \big\la \pa_{pp} V (t,p), [h(t,\cdot)p, \ h(t,\cdot)p] \big\ra\\
&+& \sup_{a \in A} \Big[ \big\la \pa_p V(t,p), \cL^{\ast a}_t p\big\ra + \big\la f(t, \cdot, a), p\big\ra\Big] = 0. \nonumber
\eea

We would like to point out that the deterministic control problem studied in Section \ref{sect-deterministic} is equivalent to the noisy observation control problem with $h \equiv 0$, i.e. the pure noise case. Under this situation, we will now show that the master equation (\ref{master}) is the HJB equation (\ref{eq:hjb_mortensen}), when restricted to those measures with density.
In fact, in this case, the HJB equation \reff{eq:hjb_mortensen}  becomes
\bea \label{eq:hjb_mortensen_exam}
\pa_t V(t, p)  + \sup_{a \in A}  \Big[ \big\la \pa_p V(t,p), \cL^{\ast a}_t p\big\ra + \big\la f(t, \cdot, a), p\big\ra\Big]  = 0.
\eea
By using integrating by parts formula, we have
\beaa
\la \pa_p V(t,p), \pa_{xx} p\ra &=&\la  \pa_{xx} \pa_p V(t,p), p\ra;\\
\la  \pa_p V(t,p),  b(t,\cd,a) \pa_x p\ra&=& -\big\la \pa_x \pa_p V(t,p)  b(t,\cd,a) + \pa_p V(t,p) \pa_x b(t,\cd, a), ~ p\big\ra.
\eeaa
Then, for measure $\mu$ with density and by using \reff{eq:mu_p_der},
\beaa
\la \pa_p V(t,p), \cL^{\ast a}_t p\ra &=& \Big\la \pa_p V(t,p),~ \frac{1}{2} \pa_{xx} p - b(t,\cd,a) \pa_x p - \pa_x b(t,\cd, a)p\Big\ra\\
&=&  \Big\la {1\over 2} \pa_{xx} \pa_p V(t,p) +  b(t,\cd, a) \pa_x \pa_p V(t,p),~p\Big\ra\\
&=& \Big\la {1\over 2} \pa_{x} \pa_\mu V(t,\mu) +  b(t,\cd, a)  \pa_\mu V(t,\mu),~p\Big\ra.
\eeaa
Plug this into  \reff{eq:hjb_mortensen} we obtain our master equation \reff{master} immediately.

\clearpage

\bibliographystyle{abbrvnat}

\end{document}